\documentclass[12pt,reqno]{amsart}
\usepackage[headings]{fullpage}
\usepackage{amsmath,amssymb,amsthm}
\usepackage{bbold}
\usepackage[bookmarks=true,%
colorlinks=true,%
linkcolor=blue,%
citecolor=blue,%
filecolor=blue,%
menucolor=blue,%
urlcolor=blue,%
breaklinks=true]{hyperref}
\usepackage{graphicx,subcaption,url}
\usepackage{fullpage,comment}
\usepackage[shortlabels]{enumitem}
\setlist{font=\normalfont}
\usepackage{mathrsfs,mathtools}
\usepackage{tikz,tikz-cd}
\usetikzlibrary{knots,hobby,math,decorations.markings}
\usepackage{ifthen}
\allowdisplaybreaks

\makeatletter
\renewcommand*{\fps@figure}{htpb!}
\makeatother

\newtheorem{theorem}{Theorem}[section]
\theoremstyle{definition}
\newtheorem{proposition}[theorem]{Proposition}
\newtheorem{lemma}[theorem]{Lemma}
\newtheorem{definition}[theorem]{Definition}
\newtheorem{remark}[theorem]{Remark}
\newtheorem{corollary}[theorem]{Corollary}
\newtheorem{conjecture}[theorem]{Conjecture}

\newtheorem{example}[theorem]{Example}

\DeclareMathAlphabet{\AMSbb}{U}{msb}{m}{n}

\providecommand{\abs}[1]{\left|{#1}\right|}
\providecommand{\floor}[1]{\left\lfloor{#1}\right\rfloor}

\DeclareMathOperator{\tr}{tr}

\DeclareMathOperator{\id}{id}

\DeclareMathOperator{\End}{End}

\DeclareMathOperator{\imag}{Im}
\DeclareMathOperator{\Li}{Li}

\def\Im{\imag}

\newcommand{\red}[1]{{\color{red} #1}}
\newcommand{\blue}[1]{{\color{blue} #1}}

\def\BZ{\AMSbb Z}
\def\BQ{\AMSbb Q}
\def\BR{\AMSbb R}
\def\BC{\AMSbb C}

\newcommand{\cx}{\BC}
\newcommand{\ints}{\BZ}

\def\calD{\mathcal D}

\def\calT{\mathcal T}

\def\s{\sigma}

\def\g{\gamma}

\def\th{\theta}

\def\be{\begin{equation}}
\def\ee{\end{equation}}

\def\z{\zeta}

\def\sma#1#2#3#4{\bigl(\smallmatrix#1&#2\\#3&#4\endsmallmatrix\bigr)} 

\def\vphi{\varphi}

\def\e{\mathbf{e}}  
\def\sma#1#2#3#4{\bigl(\smallmatrix#1&#2\\#3&#4\endsmallmatrix\bigr)}  
\def\ga{\gamma}
\def\den{\mathrm{den}}
\def\diag{\mathrm{diag}}

\newcommand{\SL}{\mathrm{SL}}
\newcommand{\PSL}{\mathrm{PSL}}

\newcommand{\mb}{\mathbf}
\newcommand{\fourier}{\mathcal{F}}
\newcommand{\ii}{\mathsf{i}}
\newcommand{\ptorus}{\Sigma_{1,1}}
\newcommand{\qtorus}{\AMSbb{T}}
\newcommand{\ideal}[1]{\langle#1\rangle}
\newcommand{\qteich}{\widehat{\qtorus}}

\newcommand{\balanced}{{\mathrm{bl}}}
\newcommand{\btorus}{\qtorus^\balanced}
\newcommand{\bteich}{\qteich^\balanced}

\newcommand{\marA}{\mathcal{A}}



\makeatletter

\renewcommand\thepart{\@Roman\c@part}%
\renewcommand\part{%
\if@noskipsec \leavevmode \fi
\par
\addvspace{6.7ex}%
\@afterindentfalse
\secdef\@part\@spart}
\def\@part[#1]#2{%
\ifnum \c@secnumdepth >\m@ne
\refstepcounter{part}%
\addcontentsline{toc}{part}{Part~\thepart.\ #1}%
\else
\addcontentsline{toc}{part}{#1}%
\fi
{\parindent \z@ \raggedright
\interlinepenalty \@M
\normalfont
\ifnum \c@secnumdepth >\m@ne
\centering\large\scshape \partname~\thepart.%
\hspace{1ex}%
\fi%
\large\scshape #2%
\markboth{}{}\par}%
\nobreak
\vskip 4.7ex
\@afterheading}
\def\@spart#1{
\refstepcounter{part}%
\addcontentsline{toc}{part}{#1}%
{\parindent \z@ \raggedright
\interlinepenalty \@M
\normalfont
\centering\large\scshape #1\par}%
\nobreak
\vskip 4.7ex
\@afterheading}
\renewcommand*\l@part[2]{%
\ifnum \c@tocdepth >-2\relax
\addpenalty\@secpenalty
\addvspace{0.75em \@plus\p@}%
\begingroup
\parindent \z@ \rightskip \@pnumwidth
\parfillskip -\@pnumwidth
{\leavevmode
\normalsize \bfseries #1\hfil \hb@xt@\@pnumwidth{\hss #2}}\par
\nobreak
\if@compatibility
\global\@nobreaktrue
\everypar{\global\@nobreakfalse\everypar{}}%
\fi
\endgroup
\fi}

\def\l@subsection{\@tocline{2}{0pt}{2pc}{6pc}{}}
\makeatother


\begin{document}
\title{On the Bonahon--Wong--Yang invariants of pseudo-Anosov maps}

\author{Stavros Garoufalidis}
\address{
Shenzhen International Center for Mathematics, Department of Mathematics \\
Southern University of Science and Technology \\
1088 Xueyuan Avenue, Shenzhen, Guangdong, China \newline
{\tt \url{http://people.mpim-bonn.mpg.de/stavros}}}
\email{stavros@mpim-bonn.mpg.de}

\author{Tao Yu}
\address{Shenzhen International Center for Mathematics \\
Southern University of Science and Technology \\
1088 Xueyuan Avenue, Shenzhen, Guangdong, China}
\email{yut6@sustech.edu.cn}

\thanks{
  {\em Keywords and phrases}:
  quantum hyperbolic geometry, hyperbolic 3-manifolds, hyperbolic knots,
  cusped hyperbolic 3-manifolds, pseudo-Anosov surface homeomorphisms, 
  Volume Conjecture, Quantum Modularity Conjecture, perturbative Chern--Simons
  theory, Bonahon--Wong--Yang invariants, 1-loop invariants, Baseilhac--Benedetti
  invariants.
}

\date{23 September 2025,First edition 26 December 2024}

\begin{abstract}
  We conjecture (and prove for once-punctured torus bundles) that the
  Bonahon--Wong--Yang invariants of pseudo-Anosov homeomorphisms
  of a punctured surface at roots of unity coincide with the 1-loop invariant
  of their mapping torus at roots of unity. This explains the topological invariance
  of the BWY invariants and how their volume
  conjecture, to all orders, and with exponentially small terms included,
  follows from the quantum modularity conjecture.
  Using the numerical methods of Zagier and the first author, we illustrate how
  to efficiently compute the invariants and their asymptotics to arbitrary order in
  perturbation theory, using as examples the $LR$ and the $LLR$ pseudo-Anosov
  monodromies of the once-punctured torus. Finally, we introduce descendant
  versions of the 1-loop and BWY invariants and conjecture (and numerically check
  for pseudo-Anosov monodromies of $L/R$-length at most 5) that they are related
  by a Fourier transform. 
\end{abstract}

\maketitle

{\footnotesize
\tableofcontents
}


\section{Introduction}
\label{sec.intro}

In a series of papers~\cite{BWY:I,BWY:II}, Bonahon--Wong--Yang defined invariants
of pseudo-Anosov (in short, pA) homeomorphisms of punctured surfaces at roots of
unity and conjectured that their growth rate is given in terms of the volume of
the hyperbolic mapping torus. It is a folk conjecture that these invariants are
topological 3-manifold invariants, and parts of a 3-dimensional hyperbolic TQFT
at roots of unity, studied years earlier by the pioneering work of
Baseilhac--Benedetti~\cite{BB2}, following initial ideas of Kashaev.
The main feature of these
theories is that they depend on a hyperbolic 3-manifold with nonempty boundary, and
to an $\SL_2(\BC)$-representation of its fundamental group (such as a lift of the
geometric representation), and to a complex root of unity. The invariants themselves
are given by state-sums associated to local pieces, much like the well-known
TQFT of Witten--Reshetikhin--Turaev. Unlike the WRT construction and its axioms though,
the presence of the global $\SL_2(\BC)$-representation makes gluing axioms of the
hyperbolic TQFT involved, disallowing it to be defined for closed 3-manifolds or
to non-hyperbolic 3-manifolds.

On the positive side, hyperbolic TQFT can be thought of as perturbative
complex Chern--Simons theory at the geometric representation and at a fixed root
of unity, and this is the avenue that we will pursue.

As it turns out, perturbative complex Chern--Simons theory at roots of unity leads
to a collection of power series in a variable $h$ for each complex
root of unity and effectively computable from an essential ideal triangulation
of a cusped hyperbolic 3-manifold~\cite{DG1,DG2} and some additional choices.
The topological invariance of this collection of series follows by combining recent
work of~\cite{GSW} and~\cite{GSWZ}, or alternatively older work of Reshetikhin,
Kashaev and others. We will only use the constant terms of the series mentioned above
\begin{equation}
\label{tauM}
\tau_{M,\lambda,m} : \mu'_\BC \to \overline{\BQ}/\mu'_\BC
\end{equation}
which we will call the 1-loop invariants at roots of unity~\cite[Sec.2.2]{DG2}, and
whose detailed definition we give in Section~\ref{sub.1loop} below. Here $M$ is a
cusped hyperbolic 3-manifold, $\lambda$ its canonical longitude, $m\in\ints$ is a
parameter called the descendant index, which is omitted when $m=0$. $\mu'_\BC$
denotes the set of complex roots of unity of odd order, and $\overline{\BQ}$ the
field of algebraic numbers. For a complex root of unity $\z$ of odd order, the 1-loop
invariant $\tau_{M,\lambda,m}(\z) \in \overline{\BQ}$ is defined up to multiplication
by an integer power of $\z^{1/12}$.

On the other hand, 
\begin{equation}
\label{TBWY}
T_{\vphi,m}: \mu'_\BC \to \overline{\BQ}/\mu'_\BC
\end{equation}
denotes the BWY invariant, extended to all complex roots of unity to all order,
without using any absolute values, and using a symmetric definition of the
Fock--Chekhov algebra discussed in Sections~\ref{sub.CF} and~\ref{sub.BWYdef} below. 

Our goal is to explain the following conjecture and its consequences, as well
as to provide a proof for the case of 1-punctured torus bundles. If $\vphi$ is
a surface homeomorphism, we denote by $M_\vphi$ the corresponding mapping torus. As
is well-known, if $\vphi$ is pA then $M_\vphi$ is a hyperbolic
3-manifold~\cite{Thurston}.

\begin{conjecture}
\label{conj.1}
For every pA punctured surface homeomorphism $\vphi$, and every
complex root of unity $\z$ of odd order, we have
\begin{equation}
\label{eq.conj1}
\tau_{M_\vphi, \lambda, m}(\z^2) = \z^{\frac{1}{12}\BZ} \tau_{M_\vphi, \lambda}(1) T_{\vphi,m}(\z) \,.
\end{equation}
\end{conjecture}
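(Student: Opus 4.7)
The plan is to establish Conjecture~\ref{conj.1} uniformly for all pA homeomorphisms $\vphi$ by exploiting layered ideal triangulations of the mapping torus and matching flip-by-flip decompositions on both sides. Given a pA map $\vphi$ of a punctured surface $\Sigma$, I would first choose an ideal triangulation $\Delta$ of $\Sigma$ together with a sequence of diagonal flips $\Delta = \Delta_0 \to \Delta_1 \to \cdots \to \Delta_N = \vphi(\Delta)$ realizing $\vphi$. Stacking the corresponding tetrahedra produces a layered ideal triangulation $\calT_\vphi$ of $M_\vphi$; for the once-punctured torus this recovers the canonical monodromy triangulation, and layered triangulations of this form exist for every pA monodromy of a punctured surface.

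Using $\calT_\vphi$, I would rewrite both sides of~\eqref{eq.conj1} as ordered products over the flip sequence. On the BWY side, the invariant $T_{\vphi,m}(\z)$ is, essentially by construction, a trace of the product $\prod_i \mathbf{Q}_i$ of Chekhov--Fock quantum flip operators associated to the sequence, with the descendant index $m$ entering as a shift of the longitudinal weight in the trace and the symmetric Fock--Chekhov setup of Section~\ref{sub.BWYdef} ensuring the specialization at $\z$ is well defined. On the 1-loop side, the Neumann--Zagier gluing matrix of $\calT_\vphi$ decomposes along layers, so $\tau_{M_\vphi,\lambda,m}(\z^2)$ factors, up to the boundary data attached to $\lambda$ and to the descendant $m$, as an ordered product of blocks, each determined by the shape and coboundary data of one tetrahedron. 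The core identification is that, after substitution of the geometric shapes and evaluation at $\z^2$, each layer contributes the same quantity as the corresponding flip operator on the BWY side, up to a factor of $\z^{1/12}$ and a normalization that assembles into the prefactor $\tau_{M_\vphi,\lambda}(1)$; this rests on the classical equivalence between the Kashaev/Chekhov--Fock flip formula in shear coordinates and the Ptolemy/NZ shape transition across a tetrahedron, together with the Chern--Simons interpretation of the 1-loop invariant used in~\cite{GSW,GSWZ}.

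After the flip-by-flip matching, independence of the chosen sequence follows from invariance of each side: on the 1-loop side from the topological invariance proved via Pachner moves in the references above, and on the BWY side from the Pentagon relation satisfied by the quantum flip operators. The main obstacle is the flip-by-flip matching itself in the higher-genus case: for the 1-punctured torus every flip corresponds to a single $L$ or $R$ letter and the matching is essentially explicit, but for higher genus the flips interact through the non-local Poisson structure on decorated Teichm\"uller space, so the correspondence between the layers of $\calT_\vphi$ and the quantum flip operators must be carried out with careful bookkeeping of the peripheral/longitudinal data attached to $\lambda$ and of the descendant parameter $m$. A secondary obstacle is coherently pinning down the $\z^{1/12}$ normalization across the full flip sequence, which I expect to handle by anchoring to a single computable example (e.g.\ a puncture-slide or a small $LR$ corner embedded in the sequence) and propagating the identity along the flips.
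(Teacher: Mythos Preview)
The statement is a \emph{conjecture} in the paper; only the once-punctured torus case (Theorem~\ref{thm.1}) is proved. Your proposal aims at the full conjecture, but the central step does not go through.

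The gap is the claim that $\tau_{M_\vphi,\lambda,m}(\z^2)$ ``factors\dots as an ordered product of blocks, each determined by the shape and coboundary data of one tetrahedron.'' The 1-loop invariant is not a product over tetrahedra: by Definition~\ref{def.1loop} it is a state sum over $k\in(\BZ/n\BZ)^N$ whose summand $a_{k,m}(\theta)$ carries the \emph{global} quadratic form $\zeta^{\frac12 k^t(2B^{-1}A)k}$ coming from the full Neumann--Zagier matrix, and $B^{-1}$ is in general not local in the layer index. What the paper does in the torus case (Proposition~\ref{prop-BWY-trdet}) is the opposite direction: it expands the BWY trace $\tr(H_1\cdots H_N)$ as a sum over $k\in(\BZ/n\BZ)^N$ and matches summands to $a_{k,m}(\theta)$. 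That match works only because for $\ptorus$ the matrix $Q=2B^{-1}A$ is explicitly tridiagonal (Lemma~\ref{lem.Pm}) and splits as $\sum_i Q_{\vphi_i}$, where each $Q_{\vphi_i}$ is precisely the exponent in the kernel of the $n\times n$ Fourier matrix $\fourier_{\vphi_i}$. For a general punctured surface the flip operators live on a higher-rank quantum torus and the NZ quadratic form is not nearest-neighbor in the flip index; no such decomposition is available, and the paper itself warns that ``some, but not all'' of the arguments adapt beyond the torus. The ``classical equivalence between the Kashaev/Chekhov--Fock flip formula and the Ptolemy/NZ shape transition'' you invoke is a $q\to 1$ statement and does not by itself produce the quantum identity at roots of unity.

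Even restricted to the torus, your sketch misses the nontrivial half of the identification. The overall factor $\z^{\frac{1}{12}\BZ}\tau_{M_\vphi,\lambda}(1)$ arises from $\det(H)^{1/n}$, and in the paper this is an explicit computation (the second half of Proposition~\ref{prop-BWY-trdet}) using the Vandermonde form of $\fourier_L$, Gauss/Zolotarev--type sign determinations, Dedekind sums, and Jacobi symbols, giving a root of unity $\omega$ that depends on $n$, on the root of unity $\z$, and on the numbers of $L$'s and $R$'s in $\vphi$. This is not something you can ``anchor to a single computable example and propagate along the flips''; it has to be computed.
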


Our main theorem is the following.

\begin{theorem}
\label{thm.1}
Conjecture~\ref{conj.1} holds for all pA homeomorphisms of a once-punctured
torus.
\end{theorem}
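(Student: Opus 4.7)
The plan is to exploit the explicit combinatorial description of pA monodromies on the once-punctured torus $\ptorus$. After conjugation in the mapping class group, any pA $\vphi$ can be written as a reduced word $\vphi = W_1 \cdots W_n$ in $L$ and $R$ in which both letters occur, and the hyperbolic mapping torus $M_\vphi$ carries Floyd--Hatcher's canonical layered (monodromy) ideal triangulation $\calT_\vphi$ with exactly $n$ ideal tetrahedra whose shape parameters, edge and cusp equations, and Neumann--Zagier matrices are determined letter by letter from the word. Using the definition recalled in Section~\ref{sub.1loop}, I would first unwind $\tau_{M_\vphi,\lambda,m}(\z^2)$ from $\calT_\vphi$ as a closed formula indexed by the tetrahedra $W_i$, involving a cyclic-dilogarithm factor at $\z^2$ per tetrahedron, a Jacobian-type determinant built from the Neumann--Zagier matrix together with the canonical longitude $\lambda$, and a shift accounting for the descendant index $m$.

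In parallel, I would unwind the right-hand side $T_{\vphi,m}(\z)$ using the Fock--Chekhov description of Sections~\ref{sub.CF} and~\ref{sub.BWYdef}. The quantum Teichm\"uller algebra of $\ptorus$ is generated by three shear coordinates subject to the puncture relation, and $\vphi$ acts as a composition of quantum flips indexed by $W_1,\dots,W_n$, so the symmetric BWY state sum reduces to a product of explicit quantum-dilogarithm factors at $\z$, one per letter, followed by a symmetric trace producing a determinantal factor. The key step is then to match the two product formulas letter by letter: each letter corresponds simultaneously to a quantum flip on $\ptorus$ and to a layered ideal tetrahedron in $\calT_\vphi$ whose shape parameter is determined by the neighboring shear coordinates, and under this identification the quantum dilogarithm at $\z$ of the flip matches, after squaring, the cyclic dilogarithm at $\z^2$ of the tetrahedron, which is the geometric origin of the $\z \mapsto \z^2$ in~\eqref{eq.conj1}. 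The classical prefactor $\tau_{M_\vphi,\lambda}(1)$ absorbs the geometric-shape Jacobian that has no analogue on the BWY side.

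The main obstacle will be the normalization bookkeeping. The puncture relation on $\ptorus$ must be matched with the canonical longitude $\lambda$ used in~\eqref{tauM}, and the descendant index $m$ enters the two sides asymmetrically, as a shift of the cusp equation in~\eqref{tauM} and as a twist of the trace in~\eqref{TBWY}. Moreover, both invariants are defined only modulo $\mu'_\BC$ and up to an integer power of $\z^{1/12}$, so reconciling the symmetric Fock--Chekhov normalization with the Neumann--Zagier framing conventions, and verifying that all accumulated sign and root-of-unity ambiguities are absorbed into the $\z^{\frac{1}{12}\BZ}$ factor on the right-hand side of~\eqref{eq.conj1}, is where the technical work concentrates. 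A useful consistency check before doing the general word is to compare both sides for the $LR$ and $LLR$ monodromies, where direct computation on either side is short and the matching of factors can be carried out by hand.
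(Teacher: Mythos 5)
Your plan follows the paper's own strategy: use the layered (Floyd--Hatcher/Gu\'eritaud) monodromy triangulation, read off the Neumann--Zagier data letter by letter from the $L/R$-word (Sections~\ref{sub.layered}--\ref{sub-layered-NZ}), express the BWY invariant via the Chekhov--Fock quantum flips, and match the two state sums term by term. That is indeed what the paper does. But two of your structural claims are off, and they sit exactly where the real work of Proposition~\ref{prop-BWY-trdet} lives. First, the $\z\mapsto\z^2$ in~\eqref{eq.conj1} does not arise from ``squaring the quantum dilogarithm at $\z$.'' In the Chekhov--Fock algebra the commutation constant is $q^4$, the cyclic $n$-dimensional representation uses $S,T$ with $ST=qTS$, and the $q$-Pochhammer factors on the BWY side in \eqref{eq-BWY-mat} are already at $q^{-2}=\zeta^{-1}$, the same parameter that appears on the 1-loop side. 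The syntactic match of the summands is achieved by the elementary but non-obvious reindexing $k\mapsto-2k$ together with the $q$-Pochhammer identities \eqref{i1}--\eqref{i3}, not by any squaring of special functions.

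Second, the prefactor $\tau_{M_\vphi,\lambda}(1)$ does \emph{not} ``absorb a Jacobian with no analogue on the BWY side.'' The BWY analogue of the Jacobian-type normalization is $\det(H)^{1/n}$ in \eqref{Tphidef}, and reconciling it with the $\prod_i\calD_{\zeta^{-1}}(\theta_i^{-1})$ and $z$-power prefactors in \eqref{eq.tau} is the hardest step. The paper carries this out by recognizing $\fourier_L,\fourier_R$ as row-permuted Vandermonde matrices whose determinants are explicit Dedekind-sum exponentials $e^{-c\pi\ii n s(-2a,n)}$, and by reassembling the product of $q$-Pochhammers in $\det D_i$ into the cyclic quantum dilogarithm $\calD_{\zeta^{-1}}(\theta_i^{-1})$ via the normalization $\calD_{\zeta^{-1}}(1)=\sqrt{n}$. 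Only after that does the leftover root of unity $\omega^{1/n}$ (at most a $6n$-th root) get absorbed into the $\z^{\frac{1}{12}\BZ}$ ambiguity. Your proposal never mentions the trace/determinant decomposition $T_{\vphi,m}=\tr(H)/\det(H)^{1/n}$, the $k\mapsto-2k$ step, or the Vandermonde/Dedekind-sum computation; without them the ``normalization bookkeeping'' you flag as the main obstacle cannot be closed.
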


In fact, in Section~\ref{sub.thm1} we will prove a stronger version of this
theorem, namely both invariants are given by state-sums whose summands syntactically
agree, up to an overall normalization factor!

There are several consequences of the above conjecture.

\begin{itemize}[$\bullet$, leftmargin=0pt, itemindent=*]

\item {\bf Topological invariance.}
The BWY invariant is indeed a topological invariant of a 3-manifold, namely
the mapping torus of the pA homeomorphism.

\item {\bf Effective computation.}
The BWY invariant, which takes values in the field of algebraic
numbers, is effectively computable both exactly and numerically to any desired order
of precision. In fact, the invariant for a pA map $\vphi$ of a once-punctured
torus with $L/R$-length $N$ at a root of unity of order $n$ has time complexity
$O(N n^3)$ and space complexity $O(n)$; see Section~\ref{sub.compute} below. 

\item {\bf Asymptotics.}
The above conjecture, together with the quantum modularity
conjecture, implies the volume conjecture of the BWY and the 1-loop invariants to all
orders and with exponentially small terms included. In fact, the asymptotic expansion
of the said invariants can be effectively computed using the numerical methods
of~\cite{GZ:kashaev}. We will illustrate those methods in Section~\ref{sec.asy}
with two examples of pA maps of the once-punctured torus, namely the 
standard choice of $LR$ (which corresponds to the simplest hyperbolic $4_1$ knot)
and the case of $LLR$ which exhibits further phenomena not seen by the highly
symmetric $LR$. To whet the appetite, the BWY invariant of the $LR$ given
in Equation~\eqref{BWYLR}, satisfies
\begin{equation}
T_{LR}(e^{2\pi \ii/20001}) \approx 4.0108263579\times 10^{1402}
\end{equation}
and 
\begin{equation}
\label{LRasyfew}
T_{LR}(e^{2\pi\ii/n}) \sim \frac{1}{\sqrt{2}}
\big(1 - \frac{(-1)^{(n-1)/2}}{\sqrt{3}} \big)
e^{\frac{v}{2}(n -1/n)} \hat\Phi_{LR} \Big(\frac{4 \pi \ii}{3 \sqrt{-3} n} \Big)
\end{equation}
for odd $n \to \infty$, where
\begin{equation}
\hat\Phi_{LR}(\hbar) = 1 + \frac{17}{24} \hbar + \frac{2305}{1152} \hbar^2
+ \frac{4494181}{414720} \hbar^3 + \frac{3330710213}{39813120} \hbar^4 
+ \frac{5712350244311}{6688604160} \hbar^5 + \dotsb
\end{equation}
and
\begin{equation}
v_{LR}=\frac{\ii \mathrm{Vol}_{LR}}{2 \pi \ii} \approx 0.323, \qquad
\mathrm{Vol}_{LR} = 2 \Im \Li_2(e^{2 \pi \ii/6}) \,.
\end{equation}

\item {\bf Descendants.} A final consequence is the descendant
families of the 1-loop and of the BWY invariants at roots of unity. There are two notable features of these functions.

The first feature is that when $\z$ is a root of unity of order $n$, the descendants
are $n$-periodic functions of $m$, which leads to the following Fourier transform
conjecture relating the 1-loop invariants with respect to the longitude
$\tau_{M,\lambda,m}$ consider in this paper to the 1-loop invariants with respect to
the meridian $\tau_{M,\mu,m}$ considered in~\cite{DG2,GZ:kashaev}.

\begin{conjecture}
\label{conj.3}
Fix a cusped hyperbolic 3-manifold $M$. There is a choice of meridian $\mu$ such that
for all roots of unity $\z$ of odd order $n$ and all integers $m$ we have
\begin{equation}
\label{eq.ff1}
\frac{1}{\sqrt{n}} \sum_{\ell \bmod n} \z^{m \ell}
\frac{\tau_{M,\lambda,\ell}(\z)}{\tau_{M,\lambda}(1)}
= \frac{\tau_{M,\mu,m}(\z)}{\tau_{M,\mu}(1)}
\end{equation}
up to a $12n$-th root of unity.
\end{conjecture}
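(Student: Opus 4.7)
The plan is to express both sides of \eqref{eq.ff1} as explicit state-sums built from the Neumann--Zagier data of an ideal triangulation of $M$, and then to verify the Fourier identity at the level of these state-sums.

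First, I would recall from Section~\ref{sub.1loop} the definition of $\tau_{M,\gamma,m}(\z)$ for a peripheral curve $\gamma$, a descendant index $m$, and a root of unity $\z$, as a determinant/Gaussian-type expression in the Neumann--Zagier matrices twisted by a character depending on $m$ and on the integer homology class of $\gamma$. The key feature to isolate is that $m$ enters the state-sum as a summation over an integer coordinate $\ell$ associated to $\gamma$, weighted by a factor of the form $\z^{m\ell}$ times a quadratic Gaussian in $\ell$. Once this structure is identified, $\frac{1}{\sqrt{n}}\sum_{\ell \bmod n} \z^{m\ell} \tau_{M,\lambda,\ell}(\z)$ is a discrete Fourier transform of a Gaussian, which can be evaluated in closed form by the finite analogue of Gaussian integration.

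Second, I would identify the passage from the longitude $\lambda$ to the distinguished meridian $\mu$ with the action of the $S$-matrix in $\SL_2(\BZ)$ on the boundary torus $\pt M = T^2$, and show that this $S$-transformation is precisely what the discrete Fourier transform over $\BZ/n\BZ$ implements at the level of the state-sum. This is formally parallel to how the modular $S$-matrix conjugates between meridian and longitude representations in the WRT setting. Combined with the Gaussian computation above, this would reduce~\eqref{eq.ff1} to a finite Gauss-sum identity, with the normalization factor $\tau_{M,\lambda}(1)$ (resp.\ $\tau_{M,\mu}(1)$) absorbing the non-$m$-dependent prefactors.

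The main obstacle, and the step on which the choice of $\mu$ in the statement hinges, is to pin down the correct meridian so that the $\SL_2(\BZ)$ change of basis on $\pt M$ is compatible with the complex shape parameters of the geometric representation entering $\tau$: unlike the WRT setting, the dependence of the 1-loop invariant on the peripheral curve is not purely combinatorial but couples to the $\SL_2(\BC)$ holonomy, so a generic $\mu$ will not satisfy~\eqref{eq.ff1}. I would expect the correct $\mu$ to be characterized by requiring the peripheral shape parameter (in the notation of~\cite{DG1,DG2}) along $\mu$ to be the $S$-image of the corresponding longitudinal data.

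Finally, tracking framing, central-charge and $\z^{1/12}$ contributions carefully throughout the Gaussian evaluation should show that the discrepancy between the two sides is concentrated in a root of unity of order dividing $12n$, giving the stated ambiguity. A useful cross-check would be to verify the conjecture numerically for once-punctured torus bundles of small $L/R$-length using the computational framework of Section~\ref{sub.compute}, before attempting the state-sum manipulation in full generality.
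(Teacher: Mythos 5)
Conjecture~\ref{conj.3} is, as the name says, a conjecture; the paper offers no proof of it, only numerical verification for once-punctured torus bundles of small $L/R$-length and small $n$ (Section~\ref{sub-meridian}), so there is no ``paper proof'' to compare your plan against. More importantly, the paper records a concrete obstruction to exactly the state-sum strategy you propose: in Section~\ref{sub-meridian} the authors observe that for the layered triangulations of once-punctured torus bundles, the Neumann--Zagier matrix $B$ attached to the meridian is always degenerate (its rows sum to zero against the $L$-rows), so $\tau_{M,\mu,m}$ is not even defined by the state-sum of Definition~\ref{def.1loop}, which requires $\tfrac{1}{d}B$ unimodular. Your plan of writing ``both sides as state-sums built from the Neumann--Zagier data of an ideal triangulation'' therefore stalls at step one unless you first supply a definition of $\tau_{M,\mu,m}$ from a different (non-layered) triangulation and then prove triangulation-invariance across the change; the paper does not know how to do this and says so.

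Two smaller but genuine inaccuracies in the plan. First, your description of $\tfrac{1}{\sqrt n}\sum_\ell \zeta^{m\ell}\tau_{M,\lambda,\ell}(\zeta)$ as ``a discrete Fourier transform of a Gaussian'' in $\ell$ is not quite right: from~\eqref{amth} the descendant index enters the summand only linearly, as $\zeta^{-d\,\ell\,k^t B^{-1}e_N}$, not quadratically. The Fourier sum over $\ell$ therefore produces a delta-constraint $m\equiv -d\,k^t B^{-1}e_N$ on the internal variable $k$, which restricts the $k$-sum to an affine sublattice. Whether the quadratic form $\tfrac12 d^2 k^t B^{-1}A\,k$ restricted to that sublattice reproduces the meridian quadratic form is precisely the open content of the conjecture, and calling the step a Gaussian evaluation obscures rather than resolves it. Second, before any Fourier manipulation can even be posed, one has to address the fact that each $\tau_{M,\lambda,\ell}$ is only defined up to a root of unity, so the $\ell$-sum is a priori meaningless; the paper handles this by passing to the equivalent form~\eqref{eq.ff2} and using Proposition~\ref{prop-BWY-trdet} to show that the ambiguous $n$-th root $\det(H)^{1/n}$ is independent of $\ell$ and can be pulled out of the sum. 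Your plan should at least acknowledge this normalization step, since without it the left-hand side of~\eqref{eq.ff1} is ill-defined.
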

Equivalently for $M=M_\vphi$, Conjecture~\ref{conj.1} and~\eqref{eq.ff1} imply that
\begin{equation}
\label{eq.ff2}
\frac{1}{\sqrt{n}} \sum_{\ell \bmod n} \z^{2 m \ell}
T_{\vphi,\ell}(\z)
= \frac{\tau_{M_\vphi,\mu,m}(\z^2)}{\tau_{M_\vphi,\mu}(1)} \,.
\end{equation}

The second feature of the descendant invariants is that they are $q$-holonomic
functions of $m$. We illustrate this explicitly in Section~\ref{sub.qholo} for the
$4_1$ knot, and use it to draw conclusions about the asymptotic expansions of
the descendant invariants when $\z=e^{2 \pi \ii /n}$ with odd $n \to \infty$.

\end{itemize}


\section{Invariants}
\label{sec.invariants}

In this section we review the two key players of the paper, namely the 1-loop
invariants of a cusped hyperbolic 3-manifold and the BWY invariants of a pA
homeomorphism of a punctured surface.

\subsection{A review of the 1-loop invariant at roots of unity}
\label{sub.1loop}

The 1-loop invariants of a cusped hyperbolic 3-manifold at a complex root of unity
are the constant terms of power series expansions at roots of unity with very
interesting arithmetic properties explained in detail in~\cite{GSWZ}. The power series
are defined using as input an essential ideal triangulation of a cusped hyperbolic
3-manifold and a complex root of unity $\z$. These series are essentially the
perturbative expansion of complex Chern--Simons theory at the geometric representation
introduced in~\cite{DG1} when $\z=1$ and in~\cite{DG2} for general $\z$. 
The topological invariance of these series was shown in~\cite{GSW} when $\z=1$.
For our purposes, we will only need the constant terms of the above-mentioned
power series at roots of unity, which are none other than the 1-loop invariants
of~\cite{DG2}. The topological invariance of the latter are discussed in detail
in~\cite{GW:1loop}. 

We now review the definition of the 1-loop invariants of~\cite[Defn.2.1]{DG2} at
roots of unity. The definition is explicit and computer-implemented both numerically
and exactly.

The invariants depend on some combinatorial data on an ideal triangulation that
we now discuss. 
We fix an oriented hyperbolic manifold $M$ with one cusp (for instance
a hyperbolic knot complement) and an oriented ideal triangulation $\calT$ of $M$ 
containing $N$ tetrahedra $\Delta_j$ for $j=1,\dots,N$.

A choice of quad of an oriented tetrahedron is a choice of a pair of opposite
edges. Given such a choice and the orientation of a tetrahedron, we can attach
variables $z$, $z'=1/(1-z)$ and $z''=1-1/z$ at the edges as shown in
Figure~\ref{fig-tetra}. These variables, often called shapes, satisfy the
relations
\begin{equation}
z z' z'' = -1, \qquad z^{-1} + z'' = 1, \qquad (z')^{-1} + z = 1,
\qquad (z'')^{-1} + z' = 1 \,.
\end{equation}

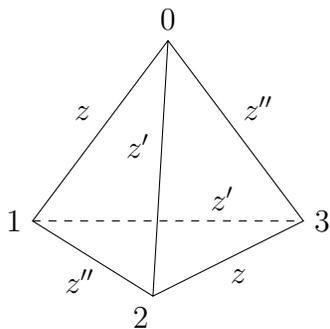
\begin{figure}
\centering
\begin{tikzpicture}[baseline=0.5cm] 
\path (0,2)coordinate(A) (-1.8,-0.4)coordinate(B) (-0.2,-1.4)
coordinate(C) (1.8,-0.4)coordinate(D);
\draw (A) 
  -- node[midway, anchor=-45]{$z$} (B)
  -- node[midway, anchor=60]{$z''$} (C)
  -- node[midway, anchor=120]{$z$} (D)
  -- node[midway, anchor=-135]{$z''$} cycle
  (A) -- node[midway, anchor=-45]{$z'$} (C);
\draw[dashed] (B) -- node[pos=0.7, above]{$z'$} (D);
\path (A)node[above]{0} (B)node[left]{1} (C)node[anchor=60]{2} (D)node[right]{3};
\end{tikzpicture}
\caption{Labeling a tetrahedron.}\label{fig-tetra}
\end{figure}

The choice of quad, combined with the orientation of $\calT$ and $M$ allow us
to attach variables $(z_j,z_j',z_j'')$ to each tetrahedron $\Delta_j$.
An Euler characteristic argument shows that the triangulation has 
$N$ edges $e_i$ for $i=1,\dots,N$. Fix peripheral curves
$\mu$ and $\lambda$ that form a symplectic basis for $H_1(\partial M,\BZ)$.

The gluing equation matrices $G$, $G'$ and $G''$ of $\calT$ are $(N+2)\times N$
matrices with integer entries whose columns are indexed by the tetrahedra
$\Delta_j$ of $\calT$ and whose rows are indexed by the edges $e_i$ of $\calT$
for $i=1,\dots,N$ followed by the two peripheral curves $\mu$ and $\lambda$. These
matrices record the number of times each tetrahedron winds around an edge, or a
peripheral curve. Explicitly, the $(i,j)$-entry of $G^\square$ for
$\square \in \{ \ , ' , ''\}$ is the number of $z_j^\square$-labeled edges of
$\Delta_j$ go around an edge $e_i$ of $\calT$; and similarly for the two
peripheral curves. 

The rows of these matrices determine the gluing equations of $\calT$ given by
\begin{equation}
\label{ge}
\sum_{j=1}^N \big(\mb{G}_{ij} \log z_j+\mb{G}_{ij}' \log z_j'
+\mb{G}_{ij}'' \log z_j''\big) = \pi \ii \boldsymbol{\eta}_i, \qquad i=1,...,N+2\,,
\end{equation}
where $\eta=(2,\dots,2,0,0)^t \in \BZ^{N+2}$. 

If $\calT$ is essential, there is a distinguished solution to the gluing equations,
together with the Lagrangian equations
\begin{equation}
\label{lag}
\log z_j + \log z_j' + \log z_j'' = \pi \ii, \qquad j=1,\dots,N
\end{equation}
at each tetrahedron that recovers the completely hyperbolic structure on $M$. 

The gluing and Lagrangian equations can be reduced in two steps as follows.
First, we can eliminate one of the variables $z_j$, $z_j'$ and $z_j''$ (say
$z_j''$) using the Lagrangian equations to obtain the equations 
\begin{equation}
\label{ge2}
\sum_{j=1}^N \big(\mb{A}_{ij} \log z_j' +\mb{B}_{ij} \log z_j\big)
= 2\pi \ii \boldsymbol{\nu}_i, \qquad i=1,...,N+2 
\end{equation}
where
\begin{equation}
\mb{A} = \mb{G}'-\mb{G}'', \qquad \mb{B} = \mb{G}-\mb{G}'', \qquad 
\boldsymbol{\nu}=\boldsymbol{\eta}-\mb{G}_{ij}''(1,\dots,1)^t \,.
\end{equation}
Second, one of the edge gluing equations is redundant, since by the combinatorics of
the triangulation, the sum of the first $N$ rows of $G^\square$ is $(2,\dots,2)$.
So, we can remove one edge-row of $(\mb{A} |\mb{B})$ and keep only one
row of a peripheral curve $\gamma$ resulting to three $N \times N$ matrices $A$ and
$B$ and a vector $\nu \in \BZ^N$ (or better, $A_\ga$, $B_\ga$ and $\nu_\ga$ to
emphasize their dependence on the peripheral curve chosen). 

The last ingredient that we need is a flattening, that is two vectors 
$f,f'\in \BZ^N$ satisfying
\begin{equation}
\label{flat}
A f' + B f = \nu\,.
\end{equation}
The vectors $f$, $f'$ and $f''=1-f-f'$ also label the edges of tetrahedra,
and satisfy with the property that the sum around any edge of the 
triangulation is $2$.

Altogether, the tuple $\Gamma=(A,B,\nu,z,f,f')$ where $z$ is the distinguished
solution of the gluing and Lagrangian equations was called a Neumann--Zagier datum
of the ideal triangulation $\calT$ in~\cite{DG1}. 
We stress that a Neumann--Zagier datum depends not just on the triangulation $\calT$,
the choice of the removed edge, and the included cusp equation, 
but also on the choice of which edges of each tetrahedron are labelled by the
distinguished shape parameter $z_i$; this $3^N$-fold choice has been called
a choice of ``quad'' or ``gauge''.

An important property of the matrix $(A|B)$ is that it is the upper half of
a symplectic matrix over the integers, as shown by Neumann--Zagier for cusped
hyperbolic manifolds in~\cite{NZ} and by Neumann for all 3-manifolds with torus
boundary components~\cite{Neumann}. It follows that $AB^t$ is symmetric
and that $(A|B)$ has full rank $N$. Thus, if $B$ is invertible, $B^{-1}A$ is
symmetric.

The definition of the 1-loop invariant at roots of unity uses 
a primitive complex root of unity $\z$ of order $n$, a $\BZ$-nondegenerate
NZ datum $\Gamma$, and choice $\theta_j$ so that $\theta_j^n=z'_j$ for $j=1,\dots,N$.

It also uses two special functions, the quantum Pochhammer symbol
\begin{equation}
\label{qp}
(x;q)_k = (1-x)(1-qx) \dots (1-q^{k-1}x)
\end{equation}
and the cyclic quantum dilogarithm
\begin{equation}
\label{Ddef}
D_\z(x)=\prod_{j=1}^{n-1}(1-\zeta^{j}x)^j 
\end{equation}
of Kashaev--Mangazeev--Stroganov~\cite[Eqn.C.3]{Kashaev:star} which curiously
predated the definition of the Kashaev invariant~\cite{K95}.

When $\zeta=e^{2 \pi \ii a/n}$ with $(a,n)=1$, the definition of the
invariant requires an $n$-th root of $D_\z(x)$ with a correction, defined by
\begin{equation}
\label{CalDdef}
\calD_\z(x)=\exp \Big({-\ii\pi}s(a,n)
+ \sum_{j=1}^{n-1} \frac{j}{n}\log(1-\z^j x) \Big) \,,
\end{equation}
where $s(a,n)$ is the Dedekind sum; see e.g.,~\cite{Rademacher}. The addition of the
Dedekind sum is chosen so that $\calD_\z(1)=\sqrt{n}$. This correction also appears
in the computations of numerical asymptotics of the Kashaev invariant of the $5_2$
knot; see~\cite[Eqn.(7.12)]{GZ:kashaev}.

Given a vector $v$, we denote by $\diag(v)$ the corresponding diagonal matrix. 

\begin{definition}
\label{def.1loop}
Fix an NZ datum $\Gamma$ with $\tfrac{1}{d} B$ unimodular for some positive integer
$d=1,2$. The $m$-th descendant 1-loop invariant of $\Gamma$ at roots of unity is the
function $\tau_{\Gamma,m}: \mu'_\BC \to \overline{\BQ}/\mu'_\BC$
given by 
\begin{equation}
\label{eq.tau}
\frac{\tau_{\Gamma,m}(\z)}{\tau_\Gamma(1)}=
\frac{1}{n^{N/2} z'^{\frac{1-n}{2n}f} z^{\frac{n-1}{2n}f'}
} 
\prod_{i=1}^N \calD_{\z^{-1}}(\th_i^{-1}) \sum_{k\in (\BZ/n\BZ)^N} a_{k,m}(\th) 
\end{equation}
where $n$ is the order of $\zeta$, and for $k=(k_1,\dots,k_N) \in (\BZ/n\BZ)^N$,
\begin{equation}
\label{amth}
a_{k,m}(\theta) = (-1)^{d k^t B^{-1} \nu} 
\zeta^{\frac12 \big[d^2 k^t  B^{-1} A k+ d k^t  B^{-1}(\nu-2me_N)\big]}
\prod_{i=1}^N
\frac{\th_i^{-( d B^{-1}A k)_i}}{(\zeta \th_i^{-1};\zeta )_{d k_i}},
\end{equation}
and 
\begin{equation}
\tau_\Gamma(1) = \frac{1}{\sqrt{\det( A\diag(z) + B\diag(z'^{-1})) z'^{f}z^{-f'}}} \,.
\end{equation}
Here, $\frac{1}{2}$ is interpreted as $2^{-1}\bmod{n}$, and $e_N\in\ints^N$ is the
unit vector in the $N$-th direction. We mostly consider the case $m=0$, in which case
we omit it from the notations.
\end{definition}

The order of the root of unity is the level of the complex Chern--Simons
theory in~\cite{DG2}. The above definition differs from the one in~\cite{DG2} by
a cyclic rotation of the shapes, but the invariant does not change under
such a rotation (i.e., under a change of quad). We have chosen the above choice of
quad to make the 1-loop invariant syntactically match with the BWY invariant of
once-punctured tori. Note that the quantity inside the square root of $\tau_\Gamma(1)$
is conjectured to equal to the
adjoint Reidemeister torsion~\cite{DG1}. The latter requires a choice of a
peripheral element at each boundary component, due to the non-acyclicity of the
chain complex that defines that torsion~\cite{Porti}. This choice of peripheral
curve which is necessary when $\z=1$ carries to the 1-loop invariant at general
roots of unity.

If $M$ is a cusped hyperbolic manifold that has a canonical meridian $\mu$ (such as
in the case of a hyperbolic knot complement or a hyperbolic
mapping torus), we will denote the corresponding invariant by $\tau_{M,\mu,m}(\z)$.
Likewise, we will denote by $\tau_{M,\lambda,m}(\z)$ the 1-loop invariant with respect
to the longitude (the latter always exists), with the convention that we 
will \emph{halve} its gluing equation, as was done in~\cite[Eqn.(4.6)]{DG1} in
accordance with the fact that the eigenvalue of the longitude at the geometric
representation is always $-1$. 

\begin{remark}
\label{rem.useful}
There is some freedom in the formula for the 1-loop invariant at roots of unity,
which can be achieved using the useful formulas:
\begin{align}
\label{i1}  
(x;q^{-1})_n & = \frac{1}{(qx;q)_{-n}} \\
\label{i2} 
(x;q)_{n+m} & = (x;q)_n (q^n x;q)_m \\
\label{i3}
(x;q)_n & = (-1)^n x^n q^{n(n-1)/2} (x^{-1};q^{-1})_n 
\end{align}
We also use the notation
\begin{equation}
\label{edef}
\e(x)=e^{2 \pi \ii x}, \qquad x \in \BQ \,.
\end{equation}
\end{remark}

\subsection{The 1-loop invariant of the $4_1$ knot}
\label{sub.41}


The gluing equations matrix of the default \texttt{SnapPy} triangulation of
the $4_1$ knot is
\begin{equation}
\label{41raw}
\begin{pmatrix}
2 & 1 & 0 & 2 & 1 & 0 \\
0 & 1 & 2 & 0 & 1 & 2 \\
1 & 0 & 0 & 0 & 0 & -1 \\
1 & 1 & 1 & 1 & -1 & -3 
\end{pmatrix}
\end{equation}
hence the three gluing equation matrices are
\begin{equation}
\label{41G}
\mb{G} =
\begin{pmatrix}
  2 & 2 \\
  0 & 0 \\
  1 & 0 \\
  1 & 1 
\end{pmatrix}, \qquad
\mb{G}' =
\begin{pmatrix}
  1 & 1 \\
  1 & 1 \\
  0 & 0 \\
  1 & -1
\end{pmatrix}, \qquad
\mb{G}'' =
\begin{pmatrix}
  0 & 0 \\
  2 & 2 \\
  0 & -1 \\
  1 & -3 
\end{pmatrix}
\qquad
\boldsymbol{\eta}=
\begin{pmatrix}
2 \\ 2 \\ 0 \\ 0
\end{pmatrix}
\,.
\end{equation}
Eliminating the shapes $z'_j$ (instead of $z''_j$ as before), removing the second
edge equation and the longitude equation gives the matrices
\begin{equation}
\label{41ABmu}
A_\mu =
\begin{pmatrix}
  1 & 1 \\
  1 & 0 
\end{pmatrix}, \qquad
B_\mu =
\begin{pmatrix}
  -1 & -1 \\
  0 & -1 \\
\end{pmatrix}, \qquad
\nu_\mu =
\begin{pmatrix}
  0 \\
  0 
\end{pmatrix} 
\end{equation}
with $B_\mu$ unimodular and
$B_\mu^{-1} A_\mu = \begin{pmatrix} 0 & -1 \\ -1 & 0 \end{pmatrix}$.
The flattenings are given by
\begin{equation}
\label{41flat}
f' = (f_1,f_2)^t, \qquad f = (f_2,f_1)^t
\end{equation}
for arbitrary integers $f_1,f_2$.

The geometric solution of the gluing equations is $(z_1,z_2)=(\z_6,\z_6)$
where $\z_6=\e(1/6)$. Then $\th=\z_6^{1/n}=\e(1/(6n))$.
Since $B_\mu$ is invertible over $\BZ$, using Equation~\eqref{amth} with $d=1$,
we obtain that the 1-loop invariant of the $4_1$ at roots of unity with respect
to the meridian $\mu$ is given by
\begin{equation}
\label{tau41M}
\tau_{4_1,\mu}(\z) = \frac{1}{n\sqrt[4]{3}} \calD_{\z^{-1}} (\th^{-1})^2
\sum_{k, \ell \bmod n} \frac{\z^{-k \ell} \th^{k+\ell}}{
  (\z\th^{-1};\z)_k(\z\th^{-1};\z)_\ell}
\end{equation}
where a (fixed) $8$-th root of unity is removed for clarity. This agrees with the
following function of~\cite[Eqn.(95)]{GZ:kashaev} up to a $12n$-th root of unity.
\begin{equation}
\label{J141}
J^{(\s_1)}(\z) = \frac{1}{\sqrt[4]{3}} \frac{1}{\sqrt{n}}
\calD_{\z}(\z \th) \calD_{\z^{-1}}(\z^{-1} \th^{-1}) \sum_{k \bmod n}
(\z \th;\z)_k (\z^{-1} \th^{-1};\z^{-1})_k \,.
\end{equation}



The sum above is motivated by Kashaev's formula for his namesake invariant
of the $4_1$ knot; see~\cite[Eqn.(7.4)]{GZ:kashaev}.

On the other hand, if we remove the second edge equation and the meridian
equation and divide the longitude equation by $2$, we obtain the matrices
\begin{equation}
\label{41ABlambda}
A_\lambda =
\begin{pmatrix}
  1 & 1 \\
  0 & 1 
\end{pmatrix}, \qquad
B_\lambda =
\begin{pmatrix}
  2 & 2 \\
  0 & 2 
\end{pmatrix}, \qquad
\nu_\lambda =
\begin{pmatrix}
  2 \\ 1
\end{pmatrix} 
\end{equation}
with $\tfrac{1}{2} B$ unimodular and
$2B^{-1}_\lambda A_\lambda = \begin{pmatrix} 1 & 0 \\ 0 & 1 \end{pmatrix}$
and $2B^{-1}_\lambda \nu_\lambda = \begin{pmatrix} 1 \\ 1 \end{pmatrix}$.
Equation~\eqref{amth} gives the 1-loop invariant for odd $n$ using the flattening $f'=(-1,1)^t$, $f=(1,0)^t$.
\begin{equation}
\label{tau41L}
  \tau_{4_1,\lambda}(\z) =
  \frac{\calD_{\z^{-1}} (\th^{-1})^2}{n\sqrt{3}\zeta_6^{\frac{1-n}{2n}}} 
\Big( \sum_{k \bmod n} (-1)^k  \frac{\z^{k^2+k/2}
  \th^{-k}}{(\z\th^{-1};\z)_{2k}} \Big)^2 \,.
\end{equation}

\subsection{The BWY invariant for $LR$}
\label{sub.BWY41}

For the definition of the BWY invariant of a pA homeomorphism $\vphi$ of a punctured
surface at roots of unity, we refer the reader to~\cite{BWY:I,BWY:II}. The invariant
was explicitly defined for $q=\e(1/n)$ for an odd positive integer $n$, but it can be
extended to the case of arbitrary roots of unity $q$, discussed in detail in
Sections~\ref{sub.BWYdef} and \ref{sec.even}. We denote the corresponding invariant
by $T_\vphi$ as in Equation~\eqref{TBWY}.


For the case of a once-punctured torus there are two distinguished elements $L$ and $R$
of its mapping class group and every element of its mapping class group is
conjugate to a product of a word of $L/R$. 

As an example, the $4_1$ complement is the mapping torus of $LR$. Using
Definition~\ref{def.BWY}, we have
\begin{equation}
\label{BWYLR}
T_{LR}(q) = \frac{1}{n} \zeta_6^{\frac{n-1}{2n}} \calD_{q^{-2}}(\th^{-1})^2
\Big( \sum_{k \bmod n} q^{\frac{1}{2}(k^2-k)} (-\th)^{k/2} (\th^{-1};q^{-2})_k \Big)^2
\end{equation}
where $\sqrt{-\theta}$ is chosen so that $(-\theta)^{n/2}=\zeta_6$.

The two formulas~\eqref{tau41L} and~\eqref{BWYLR}, after setting $\z=q^2$,
syntactically agree! Indeed, replace $k$ by $-2k$ in the summand of~\eqref{BWYLR},
and use Equation~\eqref{i1} to move the $q$-Pochhammers from the numerator to the
denominator,
\begin{equation}
q^{\frac{1}{2}(k^2-k)} (-\th)^{k/2} (\th^{-1};q^{-2})_k
\mapsto q^{2k^2+k} (-\th)^{-k} (\th^{-1};q^{-2})_{-2k}
= (-1)^k \frac{q^{2k^2+k} \th^{-k}}{(q^2\th^{-1};q^2)_{2k}} \,.
\end{equation}
Doing so, we obtain the summand of ~\eqref{tau41L} with $\z$ replaced by $q^2$.
In the next section we will see that this is not an accident, in fact it persists
for all pA maps of a once-punctured torus.

\section{1-loop equals BWY for once-punctured torus bundles}
\label{sub.equal}

In this section we prove Conjecture~\ref{conj.1} for pA homeomorphisms
of once-punctured torus bundles. Some, but not all, of our arguments can be
adapted to the case of punctured surface of negative Euler characteristic, but
for concreteness, we focus on once-punctured surfaces. 

\subsection{Layered triangulations of once-punctured torus bundles}
\label{sub.layered}

Let $\varphi$ be an orientation-preserving pseudo-Anosov homeomorphism of the
once-punctured torus $\ptorus$. It is well known that up to conjugation,
\begin{equation}
\label{vphi}
\vphi = \pm\varphi_1\dotsm\varphi_{N} \,,
\end{equation}
where each $\varphi_i$ is one of two elements $L$ and $R$ which lift to linear
actions of $\begin{pmatrix}1&0\\1&1\end{pmatrix}$ and
$\begin{pmatrix}1&1\\0&1\end{pmatrix}$, respectively, of the $\BZ^2$-covering space
$\BR^2\setminus\BZ^2$ of $\ptorus$. Moreover, both $L$ and $R$ appear in the product.
Note this convention is consistent with \texttt{SnapPy} and~\cite{Gueritaud}, but
opposite of \cite{BWY:I,BWY:II}. The two conventions are related by reversing the
orientation, so the difference is immaterial. The sign in \eqref{vphi} changes the
mapping torus $M_\varphi$, but due to the symmetry of $\ptorus$, the only relevant
difference in this paper is the meridian, which does not appear until the end of the
paper. Thus, we ignore this sign for now. Moreover, we use the convention that the
indices are in $\BZ/N \BZ$.

Given this decomposition of $\varphi$, a layered triangulation with $N$ tetrahedra
$T_1,\dotsc,T_{N}$ can be built for the mapping torus $M_\varphi$. This is discussed
in \cite{Gueritaud}. We use conventions of \texttt{SnapPy}, except the first tetrahedron $T_0$ needs to be relabeled as $T_N$ here.

Each tetrahedron is layered on $\ptorus$ as in Figure~\ref{fig-layered-tetra}, where
opposite sides of the square are identified as usual. Each $\varphi_i$ determines
how the top of $T_{i-1}$ is glued to the bottom of $T_i$. See
Figure~\ref{fig-LR-gluing}.

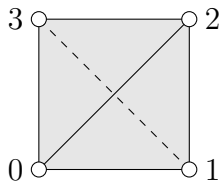
\begin{figure}
\centering
\begin{tikzpicture}
\tikzmath{\r=2;}
\draw[fill=gray!20] (0,0) rectangle (\r,\r);
\draw (0,0) -- (\r,\r);
\draw[dashed] (0,\r) -- (\r,0);
\draw[fill=white,radius=0.1,inner sep=0.2cm] (0,0)circle node[left]{$0$} (\r,0)
circle node[right]{$1$} (\r,\r)circle node[right]{$2$} (0,\r)circle node[left]{$3$};
\end{tikzpicture}
\caption{A tetrahedron layered on the once-punctured torus.}\label{fig-layered-tetra}
\end{figure}

\begin{figure}
\centering
\begin{tikzpicture}[baseline=0cm]
\tikzmath{\r=2;}
\draw[dashed] (0,\r) -- (\r,0);
\begin{scope}[blue]
\draw[thick] (0,\r) -- (0,0) -- (\r,\r) -- (\r,0)
    (0,0) -- (\r/2,\r)  (\r,\r) -- (\r/2,0);
\draw[thick, dashed] (0,0) -- (\r,0)  (0,\r) -- (\r,\r);
\path[inner sep=0.2cm] (0,0.4)node[left]{$0$} (\r,0.4)node[right]{$1$}
(\r,\r-0.4)node[right]{$2$} (0,\r-0.4)node[left]{$3$};
\end{scope}
\draw[fill=white,radius=0.1,inner sep=0.2cm] (0,0)circle node[left]{$0$}
(\r,0)circle node[right]{$1$} (\r,\r)circle node[right]{$2$} (0,\r)
circle node[left]{$3$};
\path (\r/2,-0.8)node{$L$};
\end{tikzpicture}
\qquad
\begin{tikzpicture}[baseline=0cm]
\tikzmath{\r=2;}
\draw[dashed] (0,\r) -- (\r,0);
\begin{scope}[red]
\draw[thick] (0,\r) -- (\r,\r) -- (0,0) -- (\r,0)
    (0,0) -- (\r,\r/2)  (\r,\r) -- (0,\r/2);
\draw[thick, dashed] (0,0) -- (0,\r)  (\r,0) -- (\r,\r);
\path[inner sep=0.2cm] (0.4,0)node[below]{$0$} (\r-0.4,0)node[below]{$1$}
(\r-0.4,\r)node[above]{$2$} (0.4,\r)node[above]{$3$};
\end{scope}
\draw[fill=white,radius=0.1,inner sep=0.2cm] (0,0)circle node[below]{$0$}
(\r,0)circle node[below]{$1$} (\r,\r)circle node[above]{$2$}
(0,\r)circle node[above]{$3$};
\path (\r/2,-0.8)node{$R$};
\end{tikzpicture}
\caption{Layering of $L$ and $R$.}\label{fig-LR-gluing}
\end{figure}
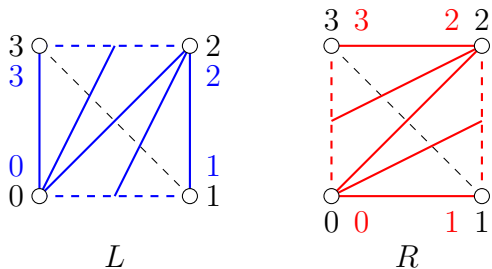

The gluing equations can be obtained by looking at the cusp. For a single
tetrahedron, this looks like Figure~\ref{fig-tetra-cusp} from the outside. When
the next tetrahedron is layered on top, this looks like Figure~\ref{fig-cusp-layer}.

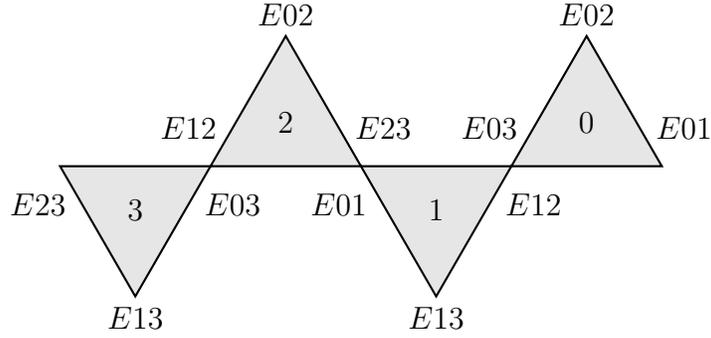
\begin{figure}
\centering
\begin{tikzpicture}
\tikzmath{\r=2; \h=\r/2/sqrt(3);}
\draw[thick,fill=gray!20] (0,0) -- ++(-60:\r) -- ++(60:2*\r) -- ++(-60:2*\r)
-- ++(60:2*\r) -- ++(-60:\r) -- cycle;
\path (0.5*\r,-\h)node{$3$} (1.5*\r,\h)node{$2$} (2.5*\r,-\h)node{$1$}
(3.5*\r,\h)node{$0$};
\path (0.5*\r,-3*\h)node[below]{$E13$} (2.5*\r,-3*\h)node[below]{$E13$};
\path (1.5*\r,3*\h)node[above]{$E02$} (3.5*\r,3*\h)node[above]{$E02$};
\begin{scope}[every node/.style={circle}]
\path (4*\r,0)node[anchor=-120]{$E01$} (0,0)node[anchor=60]{$E23$};
\path (3*\r,0)node[anchor=-60]{$E03$} node[anchor=120]{$E12$};
\path (2*\r,0)node[anchor=60]{$E01$} node[anchor=-120]{$E23$};
\path (\r,0)node[anchor=-60]{$E12$} node[anchor=120]{$E03$};
\end{scope}
\end{tikzpicture}
\caption{Triangles of the same tetrahedron on the cusp.}\label{fig-tetra-cusp}
\end{figure}

\begin{figure}
\centering
\begin{tikzpicture}
\tikzmath{\r=1.5; \h=\r/2/sqrt(3);}
\fill[blue!20] (0,2*\h) -- (\r,0) -- ++(0,4*\h) -- (3*\r,0) -- ++(0,4*\h)
-- (4*\r,2*\h) -- (4*\r, 0) -| cycle;
\draw[thick,fill=gray!20] (0,0) -- ++(-60:\r) -- ++(60:2*\r) -- ++(-60:2*\r)
-- ++(60:2*\r) -- ++(-60:\r) -- cycle;
\path (0.5*\r,-\h)node{$3$} (1.5*\r,\h)node{$2$} (2.5*\r,-\h)node{$1$}
(3.5*\r,\h)node{$0$};
\begin{scope}[blue]
  \draw[thick] (0,2*\h) -- (\r,0) -- ++(0,4*\h) -- (3*\r,0) -- ++(0,4*\h)
  -- (4*\r,2*\h);
  \path (1/3*\r,2/3*\h)node{$3$} (7/6*\r,2.5*\h)node{$2$} (9/4*\r,2/3*\h)node{$1$}
  (19/6*\r,2.5*\h)node{$0$};
\end{scope}
\path (2*\r,-3.5*\h)node{$L$};
\end{tikzpicture}
\qquad
\begin{tikzpicture}
\tikzmath{\r=1.5; \h=\r/2/sqrt(3);}
\fill[red!20] (0,0) -- (2*\r,4*\h) -- (2*\r,0) -- (4*\r,4*\h) |- cycle;
\draw[thick,fill=gray!20] (0,0) -- ++(-60:\r) -- ++(60:2*\r) -- ++(-60:2*\r)
-- ++(60:2*\r) -- ++(-60:\r) -- cycle;
\path (0.5*\r,-\h)node{$3$} (1.5*\r,\h)node{$2$} (2.5*\r,-\h)node{$1$}
(3.5*\r,\h)node{$0$};
\begin{scope}[red]
\draw[thick] (0,0) -- (2*\r,4*\h) -- (2*\r,0) -- (4*\r,4*\h) -- (4*\r,0);
\path (3/4*\r,2/3*\h)node{$3$} (11/6*\r,2.5*\h)node{$2$} (11/4*\r,2/3*\h)node{$1$}
(23/6*\r,2.5*\h)node{$0$};
\end{scope}
\path (2*\r,-3.5*\h)node{$R$};
\end{tikzpicture}
\caption{Layering tetrahedra on the cusp.}\label{fig-cusp-layer}
\end{figure}
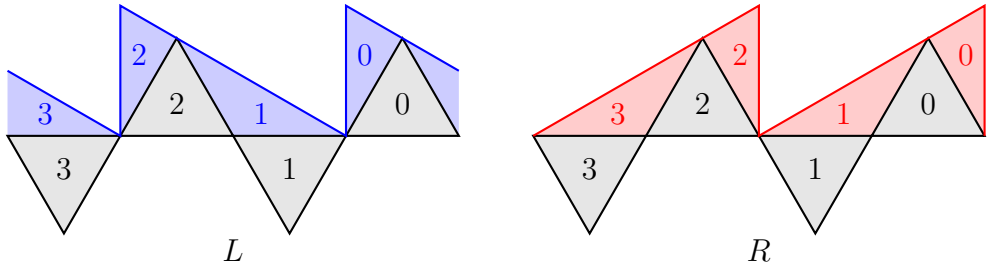

Now let $E_i$ be the $E02$ edge of $T_{i-1}$. Suppose $\varphi_i=L$, and the next
time $L$ appears at $\varphi_{i+k}$. (Recall the indices are cyclic.) Using the
layering rules of the cusp, we see that $E_i$ is identified with $E01$ and $E23$
of $T_i,\dotsc,T_{i+k-1}$ and topped off with $E13$ of $T_{i+k}$. See
Figure~\ref{fig-edge-gluing} for an example where $k=3$. This shows that the
gluing equation at edge $E_i$ is
\begin{equation}
\label{eq-gluingL}
z'_{i-1}z_i^2\dotsm z_{i+k-1}^2z'_{i+k}=e^{2\pi\ii}.
\end{equation}
The case of $\varphi_i=R$ can be obtained similarly, giving the equation
\begin{equation}
\label{eq-gluingR}
z'_{i-1}(z''_i)^2\dotsm (z''_{i+k-1})^2z'_{i+k}=e^{2\pi\ii}.
\end{equation}

\begin{figure}
\centering
\begin{tikzpicture}[thick]
\tikzmath{\r=2; \h=\r/2/sqrt(3);}
\fill[blue!20] (2/3*\r, 7*\h) -- (60:\r) -- (2*\r,4*\h);
\draw[blue] (2/3*\r, 7*\h) -- (2*\r,4*\h)  (19/18*\r,14/3*\h)node{$1$};
\fill[red!30] (\r/4,6*\h) -- (2/3*\r, 7*\h) -- (60:\r) -- cycle  (60:\r)
-- (2*\r,4*\h) -- (2*\r,2*\h) -- cycle;
\draw[red] (\r/4,6*\h) -- (2/3*\r, 7*\h) -- (60:\r) -- (2*\r,4*\h)
(17/36*\r,16/3*\h)node{$2$} (3/2*\r,3*\h)node{$1$};
\fill[red!20] (0,4*\h) -- (\r/4, 6*\h) -- (60:\r) -- cycle  (60:\r) -- (2*\r,2*\h)
-- (2*\r,0);
\draw[red] (0,4*\h) -- (\r/4, 6*\h) -- (60:\r) -- (2*\r,2*\h)
(\r/4,13/3*\h)node{$2$} (3/2*\r,5/3*\h)node{$1$};
\draw[blue,fill=blue!20] (0,0) -- (0,4*\h) -- (2*\r,0)  (\r/6,2.5*\h)node{$2$}
(5/4*\r,2/3*\h)node{$1$};
\draw[fill=gray!20] (0,0) -- (\r,0) -- (60:\r) -- cycle;
\draw (\r,0) -- (2*\r,0);
\path (\r/2,\h)node{$2$};
\end{tikzpicture}
\caption{The edge $E_i$ viewed from the cusp for $\varphi_i=L$.}\label{fig-edge-gluing}
\end{figure}
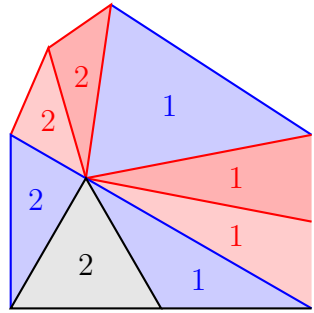

We also need the longitude equation. Note the longitude of the mapping torus is
the peripheral curve of the surface, which appears horizontal in our cusp diagrams.
To obtain the simplest equation possible, we use a cyclic permutation to make
$\varphi_1=L$ and $\varphi_N=R$. Then the region formed by $T_{N-1},T_{N},T_1$
in the cusp contains a longitude. See Figure~\ref{fig-cusp-longi}. The longitude
equation is easily read from the diagram as
\begin{equation}
\label{eq-gluing-longi}
\left(z_{N}(z'_{N-1})^{-1}(z''_{N})^{-1}z'_1\right)^2=e^{0\pi\ii}.
\end{equation}

\begin{figure}
\centering
\begin{tikzpicture}
\tikzmath{\r=1.5; \h=\r/2/sqrt(3);}
\fill[blue!20] (0,0) -- (0,4*\h) -- (3/2*\r,3*\h) -- ++(0,2*\h) -- ++(-30:2*\h)
-- (7/2*\r,3*\h) -- ++(0,2*\h) -- ++(-30:2*\h) |- cycle;
\begin{scope}[blue]
  \draw[blue,thick] (0,4*\h) -- (3/2*\r,3*\h) -- ++(0,2*\h) -- ++(-30:2*\h)
  -- (7/2*\r,3*\h) -- ++(0,2*\h) -- ++(-30:2*\h);
\path (1/2*\r,7/3*\h)node{$3$} (5/2*\r,7/3*\h)node{$1$};
\end{scope}
\fill[red!20] (0,0) -- (2*\r,4*\h) -- (2*\r,0) -- (4*\r,4*\h) |- cycle;
\begin{scope}[red]
\draw[thick] (0,0) -- (2*\r,4*\h) -- (2*\r,0) -- (4*\r,4*\h) -- (4*\r,0);
\draw[dashed] (0,0) -- (0,4*\h);
\path (3/4*\r,2/3*\h)node{$3$} (11/6*\r,2.5*\h)node{$2$} (11/4*\r,2/3*\h)node{$1$}
(23/6*\r,2.5*\h)node{$0$};
\end{scope}
\draw[thick,fill=gray!20] (0,0) -- ++(-60:\r) -- ++(60:2*\r) -- ++(-60:2*\r)
-- ++(60:2*\r) -- ++(-60:\r) -- cycle;
\path (0.5*\r,-\h)node{$3$} (1.5*\r,\h)node{$2$} (2.5*\r,-\h)node{$1$}
(3.5*\r,\h)node{$0$};
\draw[->] (4.5*\r,1.5*\h) -- ++(-5*\r,0) node[above right]{$\lambda$};
\end{tikzpicture}
\caption{A neighborhood of the longitude.}\label{fig-cusp-longi}
\end{figure}
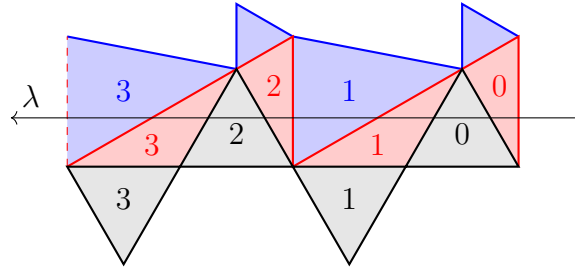

\subsection{Neumann--Zagier data}
\label{sub-layered-NZ}

For layered triangulations of $\ptorus$, the NZ data have very simple forms. Using
Equations \eqref{eq-gluingL}, \eqref{eq-gluingR}, \eqref{eq-gluing-longi}, we have
the following:
\begin{enumerate}
\item
  If $\varphi_i=L$, and the next time $L$ appears at $\varphi_{i+k}$, then
\begin{enumerate}
\item
  $A_{i,i-1}=A_{i,i+k}=1$, and all other entries on row $i$ are $0$.
\item
  $B_{i,i}=B_{i,i+1}=\dotsb=B_{i,i+k-1}=2$, and all other entries on row $i$ are $0$.
\item
  $\nu_i=2$.
\end{enumerate}
\item
  If $\varphi_i=R$, and the next time $R$ appears at $\varphi_{i+k}$, then
\begin{enumerate}
\item
  $A_{i,i-1}=A_{i,i+k}=1$, $A_{i,i}=\dotsb=A_{i+k-1}=-2$, and all other
  entries on row $i$ are $0$.
\item
  $B_{i,i}=B_{i,i+1}=\dotsb=B_{i,i+k-1}=-2$, and all other entries on row $i$ are $0$.
\item
  $\nu_i=2-2k$.
\end{enumerate}
\item
  If $i=N$, the formulas above are replaced with the longitude, which has
  $A_{N,N-1}=-1$, $A_{N,N}=A_{N,1}=1$, $B_{N,N}=2$, and $\nu_{N}=1$.
\item
  In case the indices wrap around and the corresponding entry appears multiple
  times above, then the corresponding formulas add together.
\end{enumerate}

\begin{example}
\label{AB2}
The $(A,B,\nu)$ data of $LR$ and $LLR$ are given by 
\begin{equation}
\label{ABnuLR}
A_{LR} =
\begin{pmatrix}
  1 & 1 \\
  1-1 & 1 
\end{pmatrix}, \qquad
B_{LR} =
\begin{pmatrix}
  2 & 2 \\
  0 & 2 
\end{pmatrix}, \qquad
\nu_{LR} =
\begin{pmatrix}
  2 \\ 1
\end{pmatrix} 
\end{equation}
(which matches with~\eqref{41ABlambda}) and
\begin{equation}
\label{ABnuLLR}
A_{LLR} =
\begin{pmatrix}
  0 & 1 & 1 \\
  1+1 & 0 & 0 \\
  1 & -1 & 1
\end{pmatrix}, \qquad
B_{LLR} =
\begin{pmatrix}
  2 & 0 & 0 \\
  0 & 2 & 2 \\
  0 & 0 & 2
\end{pmatrix}, \qquad
\nu_{LLR} =
\begin{pmatrix}
  2 \\ 2 \\ 1
\end{pmatrix} \,.
\end{equation}
\end{example}

It is easy to see that $\frac{1}{2}B$ is unimodular since it is upper triangular
with $\pm1$'s on the diagonal. We define
\begin{equation}
\label{Peta}
Q:=2B^{-1}A, \qquad \eta:=2B^{-1}\nu \,.
\end{equation}
The Neumann--Zagier equations now read
\begin{equation}\label{eq-NZ-Q}
\Big(\sum_{j=1}^N Q_{ij}\log z'_j\Big)+2\log z_i=\pi\ii\eta_i
\qquad\text{or}\quad
\Big(\prod_{j=1}^N z_j^{\prime Q_{ij}}\Big)z_i^2=(-1)^{\eta_i}.
\end{equation}

\begin{lemma}
\label{lem.Pm}  
$\eta_i$ is the number of $L$'s in $(\varphi_i,\varphi_{i+1})$, and $Q$ is symmetric
with the $i$-th column having zero entries except at $i-1,i,i+1$ given by
\begin{equation}
\begin{split}
Q_{i-1,i}&=Q_{i,i-1}=
\begin{cases}
1,&\varphi_i=L,\\-1,&\varphi_i=R,
\end{cases}\\
Q_{i,i}&=\text{number of $R$'s in }(\varphi_i,\varphi_{i+1}).
\end{split}
\end{equation}
\end{lemma}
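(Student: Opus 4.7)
The plan is to reduce the lemma to the matrix identities $BQ = 2A$ and $B\eta = 2\nu$ with $Q$ and $\eta$ as claimed, and then to verify these identities entry-by-entry. Since $\tfrac{1}{2}B$ is unimodular (upper triangular with $\pm 1$ diagonal as observed in the statement), the identities are equivalent to the defining formulas $Q = 2B^{-1}A$ and $\eta = 2B^{-1}\nu$ from~\eqref{Peta}. The symmetry of $Q$ is automatic from the symplectic identity $AB^t = BA^t$ for Neumann--Zagier data recalled in Section~\ref{sub.1loop}, so only the specific tridiagonal values of $Q$ and the components of $\eta$ need to be checked. Throughout, $\#L(x,y)$ (resp.\ $\#R(x,y)$) denotes the number of $L$'s (resp.\ $R$'s) in the multiset $(x,y)$.

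For $B\eta = 2\nu$ and an edge row $i < N$ with $\varphi_i = L$ and next $L$ at $\varphi_{i+k}$, only the entries $B_{i,\ell} = 2$ for $\ell = i, \ldots, i+k-1$ are nonzero, so $(B\eta)_i = 2 \sum_{\ell=i}^{i+k-1} \#L(\varphi_\ell, \varphi_{\ell+1})$. Since $\varphi_i = \varphi_{i+k} = L$ while the intermediate letters are $R$'s, the $L$-count over these consecutive pairs is exactly $2$, giving $(B\eta)_i = 4 = 2\nu_i$. The case $\varphi_i = R$ is symmetric with signs reversed, yielding $(B\eta)_i = -2(2k-2) = 2\nu_i$, and the longitude row is immediate from $B_{N,N} = 2$ and $\eta_N = \#L(R,L) = 1 = \nu_N$.

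For $BQ = 2A$, the same block decomposition gives $(BQ)_{ij} = 2 \sum_{\ell=i}^{i+k-1} Q_{\ell,j}$ when $\varphi_i = L$. Cyclic tridiagonality of $Q$ restricts nonzero contributions to $j \in \{i-1, i, \ldots, i+k\}$. The two boundary columns $j = i-1$ and $j = i+k$ each receive a single contribution equal to $+1$ (namely $Q_{i,i-1}$ and $Q_{i+k-1,i+k}$, both $+1$ because $\varphi_i = \varphi_{i+k} = L$), matching $A_{i,i-1} = A_{i,i+k} = 1$. For interior columns, the key telescoping is
\[
Q_{j-1,j} + Q_{j,j} + Q_{j+1,j} = (-1) + 2 + (-1) = 0
\]
whenever $\varphi_j = \varphi_{j+1} = R$, matching $A_{i,j} = 0$. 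At the truncated boundaries $j = i$ and $j = i+k-1$ one of the three terms falls outside the summation range, but the remaining two still cancel because of the $L/R$ transition at $\varphi_i$ or $\varphi_{i+k}$. The case $\varphi_i = R$ is the mirror image with all signs flipped; the longitude row $i = N$ is handled directly by inspecting $2Q_{N,j}$ against $A_{N,j}$ for $j \in \{N-1, N, 1\}$.

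The main subtlety is cyclic bookkeeping: when the block $\{i, \ldots, i+k-1\}$ wraps past $N$, rule~(4) of Section~\ref{sub-layered-NZ} requires several contributions to $B$, $A$, and $Q$ at coinciding entries to be summed. The telescoping above remains valid under such wraps because the cyclic tridiagonal structure of $Q$ mirrors the cyclic block structure of $B$; the separate treatment of row $N$ is unproblematic since $B_{N,\cdot}$ has a single nonzero entry, precluding any overlap with a wrapping edge contribution.
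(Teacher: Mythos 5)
Your proof is correct and takes essentially the same route as the paper, which simply says ``Direct calculation.'' You reformulate the defining relations $Q=2B^{-1}A$ and $\eta=2B^{-1}\nu$ as $BQ=2A$ and $B\eta=2\nu$ (legitimate since $\tfrac12 B$ is unimodular), and you then verify these entry-by-entry against the explicit block descriptions of $A$, $B$, $\nu$ from Section~\ref{sub-layered-NZ}, using the symplectic identity $AB^t=BA^t$ to get symmetry of $Q$ for free. The telescoping $Q_{j-1,j}+Q_{j,j}+Q_{j+1,j}=0$ (respectively $=2$) for interior columns, together with the truncated two-term cancellations at the block boundaries and the single-term checks at $j=i-1,i+k$, is precisely the ``direct calculation'' the authors had in mind, and all of the cases I spot-checked (including the longitude row and the $k=1$ degenerate case) come out right; the wrap-around remark is appropriately brief, at the same level of detail as the paper's one-word proof.
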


\begin{proof}
Direct calculation.
\end{proof}

\begin{corollary}
We have: $Q1=\eta$.
\end{corollary}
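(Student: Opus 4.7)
The plan is to derive the identity $Q\mathbf{1} = \eta$ entrywise, using the local description of $Q$ and $\eta$ supplied by Lemma~\ref{lem.Pm}. Since Lemma~\ref{lem.Pm} tells us that the $i$-th row of the symmetric matrix $Q$ is supported on columns $i-1,i,i+1$ (with the wrap-around convention that coincident indices have their contributions added), the row sum reduces to
\begin{equation*}
(Q\mathbf{1})_i = Q_{i,i-1} + Q_{i,i} + Q_{i,i+1},
\end{equation*}
and by the symmetry $Q_{i,i+1} = Q_{i+1,i}$, the last entry is determined by $\varphi_{i+1}$ via the same rule. Hence $(Q\mathbf{1})_i = \sigma(\varphi_i) + r_i + \sigma(\varphi_{i+1})$, where $\sigma(L)=1$, $\sigma(R)=-1$, and $r_i$ is the number of $R$'s among $(\varphi_i,\varphi_{i+1})$.

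The next step is a four-case verification on $(\varphi_i,\varphi_{i+1})\in\{L,R\}^2$. One checks:
\begin{equation*}
(L,L):\ 1+0+1=2,\qquad (L,R):\ 1+1-1=1,\qquad (R,L):\ -1+1+1=1,\qquad (R,R):\ -1+2-1=0,
\end{equation*}
and in every case the outcome equals the number of $L$'s in $(\varphi_i,\varphi_{i+1})$, which by Lemma~\ref{lem.Pm} is exactly $\eta_i$. A slicker way to see this at once is the tautology $\sigma(\varphi)+\mathbf{1}_R(\varphi)=\mathbf{1}_L(\varphi)$ applied to both $\varphi_i$ and $\varphi_{i+1}$.

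The only genuine issue is bookkeeping for wrap-around. When $N=2$ (or more generally when two of $\{i-1,i,i+1\}$ coincide mod $N$), the two off-diagonal contributions $\sigma(\varphi_i)$ and $\sigma(\varphi_{i+1})$ are collected into the same matrix entry, according to the ``formulas add together'' clause of Lemma~\ref{lem.Pm}; but the row sum above is exactly this total, so the case analysis is unaffected. The reassuring sanity check against Example~\ref{AB2} is that for $LR$ one gets $Q=I$ and $\eta=(1,1)^t$, and $Q\mathbf{1}=\eta$ holds.

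I do not expect any real obstacle here: the corollary is a straightforward row-sum computation from the explicit presentation of $Q$ and $\eta$ given by Lemma~\ref{lem.Pm}. The most delicate bit is simply convincing the reader that the wrap-around convention is consistent with the case analysis, which is immediate once one writes the row sum symbolically and observes that it is additive in the two symmetric off-diagonal contributions.
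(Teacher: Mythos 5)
Your proof is correct and is the natural deduction from Lemma~\ref{lem.Pm}: the row sum $(Q\mathbf{1})_i = Q_{i,i-1}+Q_{i,i}+Q_{i,i+1}$ together with the tautology $\sigma(\varphi)+\mathbf{1}_R(\varphi)=\mathbf{1}_L(\varphi)$ gives $\eta_i$ in each case, and your wrap-around remark correctly handles the $N=2$ degeneration. An equally short alternative, bypassing Lemma~\ref{lem.Pm}, is to check directly from the NZ data in Section~\ref{sub-layered-NZ} that $A\mathbf{1}=\nu$ (this is precisely the flattening $f'=\mathbf{1}$, $f=0$ used in Section~\ref{sub.thm1}), whence $Q\mathbf{1}=2B^{-1}A\mathbf{1}=2B^{-1}\nu=\eta$.
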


\subsection{The Chekhov--Fock algebra}
\label{sub.CF}

For the moment, $q\in\cx^\times$ is any nonzero number. The Chekhov--Fock
algebra~\cite{FC} of the once-punctured torus $\ptorus$ is the quantum torus
\begin{equation}\label{eq-CF-def}
\begin{split}
\qtorus&=\BC\langle X^{\pm1},Y^{\pm1},Z^{\pm1}\rangle / \ideal{XY-q^4YX,YZ-q^4ZY,ZX-q^4XZ} \\
&\cong\cx[P^{\pm1}]\langle X^{\pm1},Y^{\pm1}\rangle / \ideal{XY-q^4YX}\,.
\end{split}
\end{equation}
Here, $P:=[XYZ]=q^{-2}XYZ$ is central, where the bracket denotes Weyl-ordering. The
generators $X,Y,Z$ are associated to the edges of a triangulation of $\ptorus$ in a
way such that $X,Y,Z$ appear counterclockwise around both triangles, and $P$ is
associated to the puncture. (This is opposite of \cite{BWY:I} to account for the
opposite choice of $L,R$.) Note that all triangulations of $\ptorus$ are
combinatorially equivalent, but the Chekhov--Fock algebras are related in a
non-trivial way. Let $\lambda_i$ denote the triangulation of $\ptorus$ made out of
the top faces of $T_i$. See the solid lines of Figure~\ref{fig-layered-tetra}.  The
Chekhov--Fock algebra of $\lambda_i$ is denoted $\qtorus_i$, and generators of
$\qtorus_i$ are denoted with the subscript $i$ as well. We choose $X_i$ to be the
edge $E02$ of $T_i$, which determines $Y_i$ to be edge $E01=E23$ and $Z_i$ to be
$E03=E12$.

There is a family of
isomorphisms $\Phi_{ji}:\qteich_i\to\qteich_j$ connecting the division algebras
(i.e., skew-fields) $\qteich_i$ of the Chekhov--Fock algebras. They satisfy the
cocycle conditions $\Phi_{ii}=\id$ and $\Phi_{kj}\circ\Phi_{ji}=\Phi_{ki}$, so it
suffices to describe $\Phi_{i-1,i}$. The explicit formulas are
\begin{equation}
\label{eq-CF-flip}
\begin{aligned}
\Phi_{i-1,i}(P_i)&=P_{i-1}.\\
\Phi_{i-1,i}(X_i)&=
\begin{cases}
Y_{i-1}^{-1},&\varphi_i=L,\\
Z_{i-1}^{-1},&\varphi_i=R,
\end{cases}\\
\Phi_{i-1,i}(Y_i)&=
\begin{cases}
(1+qY_{i-1})(1+q^3Y_{i-1})X_{i-1},&\varphi_i=L,\\
(1+qZ_{i-1})(1+q^3Z_{i-1})Y_{i-1},&\varphi_i=R\,.
\end{cases}
\end{aligned}
\end{equation}

The discussion above works for all invertible $q$, but now we need to specialize to
roots of unity of odd order $n$. We will keep the notation $q$, since we need to set
$\zeta=q^2$.

Let $\{w_k\}_{k\in\ints/n\ints}$ be some fixed basis of $\cx^n$. Define two linear
operators $S,T\in\End(\cx^n)$ by
\begin{equation}\label{eq-ST-def}
Sw_k=q^kw_k,\qquad Tw_k=w_{k+1}.
\end{equation}
It is easy to check that $S^n=T^n=\id$ and $ST=qTS$.

The center of $\qtorus$ is generated by $X^n,Y^n$, and $P$. Every finite dimensional
irreducible representation of $\qtorus$ has dimension $n$ and is uniquely determined
by the central elements up to isomorphism, which has the form
$\rho_i:\qtorus_i\to\End(\BC^n)$ with
\begin{equation}\label{rhodef}
\rho_i(P_i)=p_i\id,\quad
\rho_i(X_i)=a_iS^2,\quad
\rho_i(Y_i)=b_iT^2,\quad
\rho_i(Z_i)=c_iq^2(S^2T^2)^{-1}.
\end{equation}
Here, $p_i,a_i,b_i,c_i\in\BC^\times$ are constants satisfying $a_ib_ic_i=p_i$. When
we match this with the layered triangulation of the mapping torus, $-a_i^n$ is
identified with $z'_i$ due to the cross-ratio interpretation on both sides, and
$p_i^n$ is the eigenvalue squared of the longitude, which is $1$ for the complete
hyperbolic structure.

\subsection{Definition of the BWY invariant}
\label{sub.BWYdef}

The compatibility conditions between $\rho_i$ and $\Phi_{ji}$ are given
in \cite[Prop.~23]{BWY:I}. Although they gave a method of choosing compatible
constants, it does not match well with the 3-dimensional picture, so we give an
alternative definition.

Recall the discrete Fourier transform whose kernel is given by the matrix
\begin{equation}\label{eq-fouL}
\fourier_L = \frac{1}{\sqrt{n}}(q^{ij})_{i,j\in\ints/n\ints}
\end{equation}
where $q$ is a root of unity of odd order $n$. 
It is well known that $\fourier$ is unitary and $\fourier^4 = 1$. Define a related matrix
\begin{equation}\label{eq-fouR}
\fourier_R=\frac{1}{\sqrt{n}}(q^{\frac{1}{2}(i-j)^2})_{i,j\in\ints/n\ints},
\end{equation}
where $\frac{1}{2}$ is interpreted as $2^{-1}\bmod{n}$ as before.

Now define the following matrices
\begin{equation}
\label{eq-BWY-mat}
D_i=\diag(d_i^k(-q^{-1}a_i^{-1};q^{-2})_k)_{k\in\ints/n\ints},\qquad
d_i^2=
\begin{cases}
a_{i-1}b_i^{-1},&\varphi_i=L,\\
b_{i-1}b_i^{-1},&\varphi_i=R.
\end{cases}
\end{equation}
The choice of the square root $d_i$ is discussed later. Then we define $H_i=\fourier_{\varphi_i}D_i$.

\begin{lemma}\label{lem.H}
Assume that $p_i=p$ is independent of $i$, $D_i$ is well-defined, i.e.,
\begin{equation}
\label{eq-Dperiodic}
d_i^n(-q^{-1}a_i^{-1};q^{-2})_n=1,
\end{equation}
and
\begin{equation}
\label{abc2}
a_i = \begin{cases}
  b_{i-1}^{-1}, & \text{if } \varphi_i=L \,, \\
  c_{i-1}^{-1}, & \text{if } \varphi_i=R \,.
\end{cases}
\end{equation}
Then
\begin{equation}\label{eq-compat-rep}
\rho_i(r)=H_i^{-1}\cdot\left(\rho_{i-1}\circ\Phi_{i-1,i}(r)\right)\cdot H_i
\end{equation}
for all $r\in\qtorus_i$, and with $H=H_1H_1\dotsm H_{N}$,
\begin{equation}
\rho_N(r)=H^{-1}\cdot(\rho_0\circ\Phi_{0,N}(r))\cdot H.
\end{equation}
\end{lemma}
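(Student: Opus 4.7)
The plan is to verify the intertwining relation \eqref{eq-compat-rep} on a generating set of $\qtorus_i$ and then deduce the global statement by iteration. Since $P_i$ is central and $Z_i$ is determined by $X_i, Y_i, P_i$ through $P_i = q^{-2} X_i Y_i Z_i$, it suffices to check the relation on $X_i$, $Y_i$, and $P_i$. The $P_i$ case is immediate: $\Phi_{i-1,i}(P_i) = P_{i-1}$, and under the hypothesis $p_i = p_{i-1}$ both sides represent as the scalar $p\,\id$, which commutes with $H_i$.

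For $X_i$ and $Y_i$ the key tool is a short dictionary between the Fourier kernels and the shift/clock operators, obtained by direct calculation from \eqref{eq-fouL}, \eqref{eq-fouR}, \eqref{eq-ST-def}:
\begin{equation}
\fourier_L S \fourier_L^{-1} = T^{-1},\quad \fourier_L T \fourier_L^{-1} = S,\quad
\fourier_R T \fourier_R^{-1} = T,\quad \fourier_R S \fourier_R^{-1} = q^{-1/2}ST.
\end{equation}
Since $D_i$ is diagonal it commutes with $S$, and the $X_i$-relation then falls out in one line once \eqref{abc2} is substituted. The substantive computation is the $Y_i$-relation, whose heart is the operator identity
\begin{equation}
D_i T^2 = T^2\,d_i^2\,(1 + q^{-1} a_i^{-1} S^{-2})(1 + q^{-3} a_i^{-1} S^{-2})\,D_i,
\end{equation}
which is the translation, via $S w_k = q^k w_k$, of the two-step recursion
\begin{equation}
(-q^{-1}a_i^{-1};q^{-2})_{k+2} = (-q^{-1}a_i^{-1};q^{-2})_k (1 + q^{-1-2k}a_i^{-1})(1 + q^{-3-2k}a_i^{-1})
\end{equation}
of the quantum Pochhammer.

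Conjugating this identity by $\fourier_{\varphi_i}$ converts the polynomial in $S^{-2}$ into a polynomial in $T^2$ when $\varphi_i = L$, or in $T^{-2}S^{-2}$ when $\varphi_i = R$, producing precisely the two factors appearing in the Chekhov--Fock flip formula \eqref{eq-CF-flip} after evaluation at $\rho_{i-1}$. Matching coefficients forces exactly the identifications \eqref{abc2} and the choice of $d_i^2$ in \eqref{eq-BWY-mat}. The main technical obstacle is the exponent bookkeeping in the $R$-case, where the commutator $ST = qTS$ generates a cascade of $q$-factors that must collapse to the specific powers produced by $\rho_{i-1}(1 + q Z_{i-1})(1 + q^3 Z_{i-1})$; the hypothesis \eqref{eq-Dperiodic} enters precisely to ensure $D_i$ is well defined on $\BC^n$ and that all of these manipulations are legitimate modulo $n$.

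Finally, the global statement $\rho_N(r) = H^{-1}(\rho_0 \circ \Phi_{0,N}(r))H$ with $H = H_1 H_2 \cdots H_N$ follows by applying \eqref{eq-compat-rep} iteratively $N$ times and invoking the cocycle property $\Phi_{0,N} = \Phi_{0,1} \circ \Phi_{1,2} \circ \cdots \circ \Phi_{N-1,N}$ of the flip isomorphisms.
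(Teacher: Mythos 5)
Your proposal follows the paper's proof essentially step for step: it reduces to the generators $P_i$, $X_i$, $Y_i$; uses the same four Fourier-conjugation identities for $S$ and $T$; identifies the same two-step $q$-Pochhammer recursion as the mechanism for the $Y_i$ case; and closes via the cocycle property of the flip maps. Your operator identity $D_i T^2 = T^2 d_i^2 (1+q^{-1}a_i^{-1}S^{-2})(1+q^{-3}a_i^{-1}S^{-2}) D_i$ is the same computation the paper records as $\fourier_{\varphi_i}^{-1}(\rho_{i-1}\circ\Phi_{i-1,i}(Y_i))\fourier_{\varphi_i}=(1+qa_i^{-1}S^{-2})(1+q^3a_i^{-1}S^{-2})d_i^2b_iT^2$, just with $T^2$ moved to the other side.
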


A technicality here is that $\Phi_{i-1,i}(r)$ is not in $\qtorus_i$ but in a
localization. The set of denominators can be deduced from \eqref{eq-CF-flip}. The
lemma implicitly claims that $\rho_{i-1}$ can be (uniquely) extended to this
localization, which follows easily from the calculations in the proof.

\begin{proof}
The equality is trivial for $P_i$ which maps to $p_i\id$. For $r=X_i$, we use
\eqref{abc2} and the following identities that can be verified directly
\begin{equation}
\fourier_L^{-1}T^{-1}\fourier_L=S,\qquad
\fourier_R^{-1}(q^{-1/2}ST)\fourier_R=S.
\end{equation}
For $r=Y_i$, using two additional identities
\begin{equation}
\fourier_L^{-1}S\fourier_L=T,\qquad
\fourier_R^{-1}T\fourier_R=T,
\end{equation}
we get
\begin{equation}
\fourier_{\varphi_i}^{-1}\cdot\left(\rho_{i-1}\circ\Phi_{i-1,i}(Y_i)\right)\cdot \fourier_{\varphi_i}=(1+qa_i^{-1}S^{-2})(1+q^3a_i^{-1}S^{-2})d_i^2b_iT^2.
\end{equation}
Then it is simple to check that \eqref{eq-compat-rep} holds for $r=Y_i$ using the
definition of $D_i$.
\end{proof}

\begin{definition}
\label{def.BWY}
The BWY invariant at the complete hyperbolic structure is given by
\begin{equation}
\label{Tphidef}
T_{\varphi,m}: \mu'_\BC \to \overline{\BQ}/\mu'_\BC, \qquad 
T_{\varphi,m}(q)=\tr(H)/\det(H)^{1/n} 
\end{equation}
where the constants used in the definition of $H$ above are given by
\begin{align}
p_i=q^{2m},\qquad
a_i=-q^{-1}\theta_i\quad\text{where }\theta_i&=\exp(\frac{1}{n}\log z'_i),\\
d_i=q^{m\beta_i-\frac{1}{2}\eta_i}\exp\left(\frac{1-n^2}{2n}\pi\ii\eta_i-\frac{1}{n}\log z_i\right)
\quad\text{where }
\beta_i&=\begin{cases}
-1,&\varphi_i\varphi_{i+1}=LR,\\
1,&\varphi_i\varphi_{i+1}=RL,\\
0,&\text{otherwise}
\end{cases}
\end{align}
for all $i=1,\dots,N$. Here, $\frac{1}{2}$ in the exponent of $q$ means $2^{-1}\bmod{n}$ as before.

$m\in\ints/n\ints$ is the descendant index. As with the 1-loop invariants, we mainly
consider the case $m=0$, in which case we omit it from the notation.
\end{definition}

The periodicity \eqref{eq-Dperiodic} is easily checked using
\begin{equation}
d_i^n=z_i^{-1},\qquad
(-q^{-1}a_i^{-1};q^{-2})_n=(\theta_i^{-1};q^{-2})_n=1-z^{\prime-1}_i.
\end{equation}
To satisfy the rest of Lemma~\ref{lem.H}, we use \eqref{abc2} and the conservation
condition $p_i=a_i b_i c_i=q^{2m}$ to recover
\begin{equation}\label{abc}
b_i=\begin{cases}
a_{i+1}^{-1},&\text{if }\varphi_{i+1}=L,\\
q^{2m}a_i^{-1}a_{i+1},&\text{if }\varphi_{i+1}=R,
\end{cases} \qquad
c_i=\begin{cases}
q^{2m}a_i^{-1}a_{i+1},&\text{if }\varphi_{i+1}=L,\\
a_{i+1}^{-1},&\text{if }\varphi_{i+1}=R,
\end{cases}
\end{equation}
Using the Neumann--Zagier equations \eqref{eq-NZ-Q}, we get
\begin{equation}\label{di}
d_i^2=q^{2m\beta_i}(-q)^{-\eta_i}\prod_{j=1}^{N}\theta_j^{Q_{ij}},
\end{equation}
which is consistent with the previous definition \eqref{eq-BWY-mat}.

\begin{remark}
\label{rem.BWY}
We complement the above definition with some remarks. 

\begin{enumerate}[1., leftmargin=0pt, itemindent=*]
\item
The invariant has a symmetry $m\leftrightarrow-m$. This is not obvious from the
definition here, but it can be explained by an equivalent definition using the skein
algebra.
\item
BWY only consider the absolute value of $T_\vphi$, not $T_\vphi$ itself, due to the
ambiguity of the $n$-th root. From the point of view of asymptotic expansions and
the arithmetic nature of their coefficients, it is unnatural to use the absolute
value. We expect that there is a way to choose a canonical root.
\item
The BWY construction does not reflect the symmetry between $L$ and $R$;
compare~\cite[Equations~(3--4)]{BWY:I} with~\eqref{abc}, keeping in mind that our
$(a_i,b_i,c_i)$ are BWY's $(x_i,y_i,z_i)$.
\item
The definition above manifestly works for all complex roots
of unity with odd denominator, as opposed to only $e^{2 \pi \ii/n}$ for odd $n$ in
certain formulas of BWY. This is a crucial aspect of the Quantum Modularity
Conjecture.
\end{enumerate}
\end{remark}

\subsection{Proof of Theorem~\ref{thm.1}}
\label{sub.thm1}

In this section we prove Theorem~\ref{thm.1}. The comparison between 1-loop and BWY invariants is stronger than the statement there.

\begin{proposition}\label{prop-BWY-trdet}
With the notations of Definition~\ref{def.1loop} and Section~\ref{sub.BWYdef},
\begin{equation}
\tr H=\sum_{k\in(\ints/n\ints)^N}\frac{a_{k,m}(\theta)}{n^{N/2}},\qquad
\det H=\omega\prod_{i=1}^{N}z_i^{\frac{n-1}{2}}\calD^{-n}_{\zeta^{-1}}(\theta_i^{-1})
\end{equation}
where $q=\e(a/n)$, $\zeta=q^2$, and $\omega$ is a root of unity given by
\begin{equation}
\omega=\left(\frac{-2}{n}\right)^Ne^{-2\pi\ii(2\#L+\#R)ns(-2a,n)},
\end{equation}
where $\left(\frac{c}{d}\right)$ is the Jacobi symbol.
\end{proposition}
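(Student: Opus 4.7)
The plan is to compute $\tr H$ and $\det H$ separately from their explicit definitions and match each with the right-hand side. For the trace, I expand $H=H_1\dotsm H_N$ in the standard basis. Using the kernels \eqref{eq-fouL}, \eqref{eq-fouR} and the diagonal \eqref{eq-BWY-mat}, each matrix entry has the form $(H_i)_{k_{i-1},k_i} = \frac{1}{\sqrt{n}}\,q^{\alpha_{\varphi_i}(k_{i-1},k_i)}\,d_i^{k_i}(\theta_i^{-1};q^{-2})_{k_i}$ with $\alpha_L(a,b)=ab$ and $\alpha_R(a,b)=\tfrac12(a-b)^2$, so $\tr H = n^{-N/2}\sum_{k\in(\BZ/n\BZ)^N}\prod_i q^{\alpha_{\varphi_i}(k_{i-1},k_i)}d_i^{k_i}(\theta_i^{-1};q^{-2})_{k_i}$. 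I then apply the bijective substitution $k_i\mapsto -2k_i$ (valid since $n$ is odd) and match ingredients: a direct case analysis of the adjacencies $LL,LR,RL,RR$ combined with Lemma~\ref{lem.Pm} identifies $\sum_i \alpha_{\varphi_i}$ after substitution with $2k^tQk$; expanding $\prod_i d_i^{-2k_i}$ via \eqref{di} together with the symmetry of $Q$ yields $q^{k^t\eta -2m k^t\beta}(-1)^{k^t\eta}\prod_j\theta_j^{-(Qk)_j}$; and \eqref{i1} gives $(\theta_i^{-1};q^{-2})_{-2k_i}=(\zeta\theta_i^{-1};\zeta)_{2k_i}^{-1}$. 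Comparison with $a_{k,m}(\theta)/n^{N/2}$ from \eqref{amth} then reduces to the linear-algebraic identity $\beta=2B^{-1}e_N$, equivalently $B\beta=2e_N$.

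\textbf{Verifying $B\beta=2e_N$.} This is checked row-by-row from the explicit forms of $B$ and $\beta$ in Section~\ref{sub-layered-NZ} and Definition~\ref{def.BWY}. For $i<N$ the nonzero entries of $B_{i,\bullet}$ are $\pm 2$ supported on a consecutive block $\{i,\dotsc,i+k-1\}$ where $k$ is the distance to the next occurrence of the same letter; within that block $\beta_j=0$ except at the two endpoints, whose values $\mp 1$ and $\pm 1$ cancel. For the halved longitude row $B_{N,\bullet}=2e_N^T$, the sum gives $2\beta_N=2$ since $\varphi_N\varphi_1=RL$ under the cyclic rotation convention of Section~\ref{sub.layered}.

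\textbf{Determinant.} By block factorization $\det H=\prod_i\det\fourier_{\varphi_i}\cdot\prod_i\det D_i$, with $\det D_i=d_i^{n(n-1)/2}\prod_{k=0}^{n-1}(\theta_i^{-1};q^{-2})_k$. The first factor is $z_i^{-(n-1)/2}$ via $d_i^n=z_i^{-1}$. For the Pochhammer product, I count the multiplicity of each root factor $(1-\zeta^{-j}\theta_i^{-1})$; after reindexing against $D_{\zeta^{-1}}(\theta_i^{-1})$ and using the collapse $\prod_{j=0}^{n-1}(1-\zeta^{-j}\theta_i^{-1}) = 1-\theta_i^{-n} = 1-z_i^{\prime -1} = z_i$, one obtains $\prod_{k=0}^{n-1}(\theta_i^{-1};q^{-2})_k = z_i^{n-1}/D_{\zeta^{-1}}(\theta_i^{-1})$ and hence $\det D_i = z_i^{(n-1)/2}/D_{\zeta^{-1}}(\theta_i^{-1})$. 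Replacing $D_{\zeta^{-1}}^{-1}$ by $\calD_{\zeta^{-1}}^{-n}$ via \eqref{CalDdef} (with $\zeta^{-1}=\e(-2a/n)$) introduces a Dedekind factor $e^{-n\pi\ii s(-2a,n)}$ per tetrahedron, giving $\prod_i\det D_i = e^{-Nn\pi\ii s(-2a,n)}\prod_i z_i^{(n-1)/2}\calD_{\zeta^{-1}}(\theta_i^{-1})^{-n}$, which accounts for the $z_i$-and-$\calD$ product claimed on the right-hand side, plus a contribution toward $\omega$.

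\textbf{Fourier determinants and main obstacle.} The remaining $\prod_i\det\fourier_{\varphi_i}$, combined with $e^{-Nn\pi\ii s(-2a,n)}$, must reproduce the stated $\omega=\left(\frac{-2}{n}\right)^N e^{-2\pi\ii(2\#L+\#R)ns(-2a,n)}$. Both $\det\fourier_L$ and $\det\fourier_R$ are classical Gauss-sum determinants: $\det\fourier_L = n^{-n/2}\prod_{0\le i<j\le n-1}(q^j-q^i)$ is a Vandermonde, and the conjugation $\fourier_R=S\fourier_L^{-1}S$ with $S=\diag(q^{i^2/2})$ gives $\det\fourier_R = q^{\sum_{i=0}^{n-1}i^2}/\det\fourier_L$. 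Evaluating these via standard quadratic Gauss-sum identities (small-case checks such as $n=3$, $a=1$ give $\det\fourier_L=-\ii$ and $\det\fourier_R=e^{-\pi\ii/6}$, consistent with per-letter contributions $\left(\frac{-2}{n}\right)e^{-3\pi\ii n s(-2a,n)}$ for each $L$ and $\left(\frac{-2}{n}\right)e^{-\pi\ii n s(-2a,n)}$ for each $R$) and combining with the Dedekind piece above is the principal bookkeeping step, and the main obstacle, of the proof. The asymmetric weight $2\#L+\#R$ arises from the fact that $\det\fourier_L$ and $\det\fourier_R$ contribute Dedekind-sum phases in the ratio $3{:}1$ which, after subtracting the uniform $-Nn\pi\ii s(-2a,n)$ already collected from $\det D_i$, leaves the ratio $2{:}1$; the Jacobi symbol $\left(\frac{-2}{n}\right)$ appears once per tetrahedron from the sign/quadratic-residue part of the Gauss sum, and Dedekind reciprocity should close the identification.
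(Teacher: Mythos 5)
Your proposal follows essentially the same route as the paper's proof: expand $\tr H$ over basis indices, substitute $k\mapsto-2k$, match quadratic forms against $Q=2B^{-1}A$ and the $d_i$ expansion against $\eta$ and $\beta$, reduce to the identity $(\tfrac12 B)\beta=e_N$; for the determinant, factor as $\prod\det\fourier_{\varphi_i}\cdot\prod\det D_i$, compute $\det D_i=z_i^{(n-1)/2}D_{\zeta^{-1}}^{-1}(\theta_i^{-1})$ via multiplicity counting and $\prod_{j=0}^{n-1}(1-\zeta^{-j}\theta_i^{-1})=z_i$, and evaluate $\det\fourier_L,\det\fourier_R$ via Zolotarev plus the Vandermonde formula. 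Your bookkeeping for how the $3{:}1$ Dedekind-phase ratio of $\det\fourier_L,\det\fourier_R$ shifts to $2{:}1$ after absorbing $e^{-Nn\pi\ii s(-2a,n)}$ is exactly the paper's computation. The one imprecision: the missing closure step is not Dedekind reciprocity but the specific Rademacher congruence $12an\,s(a,n)\equiv a^2+1\pmod{3n}$ (together with the auxiliary lemma that $6n\,s(a,n)\in\BZ$ for odd $n$), which is what lets the paper rewrite $\zeta^{-\sum_{i=1}^{n-1}i^2}$ as $e^{8\pi\ii n s(-2a,n)}$ and collapse the resulting $e^{9\pi\ii ns(-2a,n)}$ to $e^{-3\pi\ii ns(-2a,n)}$.
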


The denominator of $s(a,n)$ is at most $2n(3,n)$ (see
e.g.,~\cite[72.Lem.A]{Rademacher}), so $\omega$ is at most a 6th root of unity. Then
Theorem~\ref{thm.1} follows from this using the flattening $f'=1,f=0$.

\begin{proof}
To prove the trace part, we write out the product definition $H=H_1\dotsm
H_N$
\begin{equation}
\tr H
=\sum_{k\in(\ints/n\ints)^N}\prod_{i=1}^N(H_i)_{k_{i-1}k_i}
=\sum_{k\in(\ints/n\ints)^N}\prod_{i=1}^N(\fourier_{\varphi_i})_{k_{i-1}k_i}d_i^{k_i}(\theta_i^{-1};q^{-2})_{k_i}.
\end{equation}
Here, we let $k_0=k_N$ for convenience. By definition,
$(\fourier_{\varphi_i})_{k_{i-1}k_i}=\frac{1}{\sqrt{n}}q^{Q_{\varphi_i}(k)}$ for some
quadratic forms $Q_L,Q_R$. A simple term-by-term calculation shows that
$Q=\sum_{i=1}^N Q_{\varphi_i}$, so the product of $\fourier_{\varphi_i}$ matrix
elements simplifies to $\frac{1}{n^{N/2}}q^{\frac{1}{2}k^tQk}$. Just like
Subsection~\ref{sub.BWY41}, let $k\to-2k$, and use \eqref{i1} and \eqref{di} to get
\begin{equation}
\begin{split}
\tr H
&=\frac{1}{n^{N/2}}\sum_{k\in(\ints/n\ints)^N}\zeta^{k^tQk}\prod_{i=1}^N\frac{q^{-2mk_i\beta_i}(-q)^{k_i\eta_i}\prod_{j=1}^{N}\theta_j^{-k_iQ_{ij}}}{(\zeta\theta_i^{-1};\zeta)_{2k_i}}\\
&=\frac{1}{n^{N/2}}\sum_{k\in(\ints/n\ints)^N}(-1)^{k^t\eta}\zeta^{-mk^t\beta}\zeta^{k^tQk+\frac{1}{2}k^t\eta}\prod_{i=1}^N\frac{\theta_i^{-(Qk)_i}}{(\zeta\theta_i^{-1};\zeta)_{2k_i}}.
\end{split}
\end{equation}
A simple calculation also shows that $(\frac{1}{2}B)\beta=e_N$. Then together with
the definitions of $Q$ and $\eta$, we have an exact match with the sum of
$a_{k,m}(\theta)$.

Now we evaluate the determinant. First, we look at the Fourier matrices
$\fourier_L,\fourier_R$. Note we already know that $\det\fourier$ is a 4-th root of
unity since $\fourier^4=1$, and thus $\det\fourier_R$ is at worst a 12-th root of
unity. We can get explicit formulas in terms of Dedekind sums.

\begin{lemma}
For $n$ odd and $(a,n)=1$,
\begin{equation}
6n \,s(a,n) = \begin{cases} 0 \bmod 3 & \text{if} \,\, (n,3)=1 \\
  a \bmod 3 & \text{otherwise.}
  \end{cases}
\end{equation}
\end{lemma}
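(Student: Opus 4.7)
The plan is to derive the congruence from Dedekind's reciprocity law
\[
s(a,n)+s(n,a)= -\frac{1}{4}+\frac{1}{12}\Big(\frac{a}{n}+\frac{n}{a}+\frac{1}{an}\Big),
\]
valid for coprime positive integers $a$ and $n$, together with the standard bound on the denominator of a Dedekind sum, namely $2k\,\gcd(3,k)\,s(h,k)\in\BZ$ whenever $(h,k)=1$ (see e.g.~\cite{Rademacher}). Clearing denominators in the reciprocity identity by multiplying through by $12an$ gives the integer identity
\[
12an\,s(a,n)+12an\,s(n,a)=a^2+n^2+1-3an,
\]
and the whole statement will be read off from this after reducing modulo~$3$.

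In the case $(n,3)=1$, the denominator bound applied to $s(a,n)$ gives $2n\,s(a,n)\in\BZ$, hence $6n\,s(a,n)=3\cdot(2n\,s(a,n))\equiv 0\pmod 3$, settling this case immediately. In the case $3\mid n$, the coprimality $(a,n)=1$ forces $(a,3)=1$, so the same denominator bound applied to the swapped sum gives $2a\,s(n,a)\in\BZ$, and therefore
\[
12an\,s(n,a)=6n\cdot(2a\,s(n,a))\equiv 0\pmod 3
\]
because $3\mid 6n$. Reducing the reciprocity identity mod~$3$ then yields
\[
(2a)\bigl(6n\,s(a,n)\bigr)\equiv a^2+n^2+1-3an\equiv 1+0+1\equiv -1\pmod 3,
\]
and inverting $2a$ (which is coprime to $3$) gives $6n\,s(a,n)\equiv -(2a)^{-1}\pmod 3$. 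A one-line check in the two residue classes $a\equiv 1,2\pmod 3$ shows $-(2a)^{-1}\equiv a\pmod 3$ in both, matching the claimed value.

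I would present the argument in exactly this order: first record the reciprocity identity and the denominator bound as the only inputs, then split into the two cases. There is no serious obstacle here; the one point that needs quiet care is the observation that in the $3\mid n$ case, the hypothesis $(a,n)=1$ automatically forces $(a,3)=1$, which is precisely what lets the denominator bound kill the swapped term $12an\,s(n,a)$ modulo~$3$. The resulting congruence then feeds directly into pinning down the $12$-th root of unity $\omega$ in the determinant computation of Proposition~\ref{prop-BWY-trdet}.
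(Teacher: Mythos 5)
Your argument is correct. The paper handles the $3\mid n$ case by quoting a packaged result, namely Rademacher's Lemma B of \S72, which asserts $12an\,s(a,n)\equiv a^2+1\pmod{3n}$, then reduces mod $3$ and divides by $2a$ exactly as you do at the end. You instead rederive the needed mod-$3$ congruence from scratch: you multiply the reciprocity law by $12an$, note that the denominator bound (Rademacher's Lemma A of \S72, which the paper also cites for a different purpose) kills the swapped term $12an\,s(n,a)$ mod $3$ when $3\mid n$, and read off $12an\,s(a,n)\equiv a^2+n^2+1\equiv 2\pmod 3$. The two routes rest on the same underlying facts (reciprocity plus the denominator bound; Rademacher's Lemma B is itself a consequence of these), so the difference is one of presentation: your version is more self-contained and avoids appealing to the stronger mod-$3n$ statement, at the cost of a couple of extra lines of bookkeeping with the reciprocity identity. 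The $(n,3)=1$ case is identical in both treatments. One small remark worth keeping in the write-up: you should note explicitly, as you implicitly use, that $6n\,s(a,n)$ is an integer in the $3\mid n$ case (again by the denominator bound with $\gcd(3,n)=3$), so that the congruence $6n\,s(a,n)\equiv a\pmod 3$ is meaningful as a congruence of integers.
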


\begin{proof}
If $(n,3)=1$, then the denominator of $s(a,n)$ is $2n$ at worst, so $6n\, s(a,n)$ is
$0 \bmod{3}$.

On the other hand, if $n$ is divisible by $3$, then we have~\cite[72.Lem.B]{Rademacher}
\begin{equation}
12an\, s(a,n)\equiv a^2+1\bmod{3n}.
\end{equation}
We drop the $n$ from the modulus. Then
$a^2\equiv1\bmod{3}$ since $(a,3)=1$. Thus, $12an \,s(a,n)\equiv2a^2\pmod{3}$, which
implies our lemma.
\end{proof}

As a simple corollary, for $n$ odd and $(a,n)=1$, $\frac{a}{n}\sum_{i=1}^{n-1}i^2+2ns(-2a,n)$ is an integer.

\begin{lemma}
For $q=\e(a/n)$ where $n$ is odd and $(a,n)=1$,
\begin{align}
\label{detFLR}
\det\fourier_L&=\left(\frac{-2}{n}\right)e^{-3\pi\ii ns(-2a,n)} \,, 
&
\det\fourier_R&=\left(\frac{-2}{n}\right)e^{-\pi\ii ns(-2a,n)} \,.
\end{align}
\end{lemma}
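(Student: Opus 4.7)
My plan is to compute both determinants as normalized quadratic Gauss sums via their spectral or circulant structure, and then to rewrite the resulting phases in the claimed Dedekind-sum form.

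For $\det\fourier_L$: the matrix is unitary with $\fourier_L^4=I$, so its eigenvalues lie in $\{\pm 1,\pm\ii\}$, with multiplicities determined by the first three traces. The key input is $\tr(\fourier_L)=\sum_j q^{j^2}/\sqrt{n}$, the classical quadratic Gauss sum, which by Gauss's theorem equals $\left(\frac{a}{n}\right)\epsilon_n$ with $\epsilon_n=1$ or $\ii$ according as $n\equiv 1,3\pmod 4$. Combined with the direct calculation $\tr(\fourier_L^2)=1$ and $\tr(\fourier_L^3)=\overline{\tr(\fourier_L)}$, this pins down the four multiplicities and hence $\det\fourier_L$ as an explicit fourth root of unity depending on $n\bmod 8$ and $\left(\frac{a}{n}\right)$.

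For $\det\fourier_R$: the matrix is circulant with first column $(q^{k^2/2}/\sqrt{n})_k$, so its eigenvalues are the $n$-point DFT of that column. Completing the square in $k$ gives
\[
\lambda_l=(G/\sqrt{n})\,q^{-(2a)^{-1}l^2},\qquad G=\sum_{k\bmod n}q^{k^2/2},
\]
where $(2a)^{-1}$ is taken modulo $n$. By Gauss, $G/\sqrt{n}=\left(\frac{a}{n}\right)\left(\frac{2}{n}\right)\epsilon_n$. Multiplying the eigenvalues yields
\[
\det\fourier_R=\left(\tfrac{a}{n}\right)\left(\tfrac{2}{n}\right)\epsilon_n^n\cdot\exp\!\bigl(-2\pi\ii\,(2a)^{-1}(n-1)(2n-1)/6\bigr),
\]
an explicit root of unity.

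The final and substantive step is to match these Gauss-sum expressions with the claimed Dedekind-sum form $\left(\frac{-2}{n}\right)e^{-c\pi\ii n s(-2a,n)}$ (with $c=1$ for $\fourier_R$, $c=3$ for $\fourier_L$). After factoring out $\left(\frac{-1}{n}\right)=(-1)^{(n-1)/2}$ to convert $\left(\frac{2}{n}\right)$ into $\left(\frac{-2}{n}\right)$, this reduces (for $\fourier_R$) to a Gauss-sum--Dedekind-sum identity
\[
e^{-\pi\ii n s(-2a,n)}=\left(\tfrac{-a}{n}\right)\epsilon_n^n\cdot\exp\!\bigl(-2\pi\ii\,(2a)^{-1}(n-1)(2n-1)/6\bigr),
\]
with an analogous cubed identity for $\fourier_L$. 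This is a specialization of the Dedekind $\eta$-function transformation law under $SL_2(\BZ)$, provable by first verifying the base case $a=1$ directly (using the preceding lemma which controls $6n s(a,n)\bmod 3$ together with Rademacher's congruence $12an\,s(a,n)\equiv a^2+1\pmod{3n}$ when $3\mid n$), and then extending to general $a$ via Dedekind reciprocity.

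\emph{Main obstacle.} The spectral and circulant evaluations are routine Gauss-sum arithmetic. The real work is the Gauss-to-Dedekind identification: matching the explicit phase $\epsilon_n^n\,e^{-2\pi\ii(2a)^{-1}(n-1)(2n-1)/6}$ with $e^{-\pi\ii n s(-2a,n)}$ modulo $12n$-th roots of unity requires a careful case analysis on $n\bmod 24$, combining the preceding lemma, classical Dedekind reciprocity, and the $3$-adic behavior of $s(a,n)$ when $3\mid n$.
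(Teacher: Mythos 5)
Your approach is genuinely different from the paper's and there is a substantial gap. The paper does not go through Gauss sums at all: it observes that $\fourier_L$ is obtained from the standard Fourier matrix $\frac{1}{\sqrt n}(\zeta^{-ij})$ by the row permutation $i\mapsto -2i$, whose sign is $\left(\frac{-2}{n}\right)$ by an extension of Zolotarev's lemma; then it evaluates the standard Fourier determinant directly by the Vandermonde formula, pulls out powers of $\zeta$ to recognize the product $\prod_{k=1}^{n-1}(1-\zeta^{-k})^k = D_{\zeta^{-1}}(1)$, and uses the definition $D_{\zeta^{-1}}(1) = e^{n\ii\pi s(-2a,n)}\calD_{\zeta^{-1}}(1)^n = e^{n\ii\pi s(-2a,n)} n^{n/2}$ together with the preceding corollary (that $\frac{a}{n}\sum_{i=1}^{n-1}i^2 + 2n s(-2a,n)\in\BZ$). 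The Dedekind sum thus falls out of the definition of $\calD$, with essentially no Gauss-sum arithmetic.

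Your Gauss-sum computations for $\tr\fourier_L$ and for the circulant eigenvalues of $\fourier_R$ are correct in outline, and the spectral bookkeeping (multiplicities from $\tr\fourier_L^j$) does pin down $\det\fourier_L$ as an explicit fourth root of unity. But the step you label the ``main obstacle'' --- converting the resulting explicit phase $\left(\frac{a}{n}\right)\left(\frac{2}{n}\right)\epsilon_n^n\,e^{-2\pi\ii(2a)^{-1}(n-1)(2n-1)/6}$ (and its cube) into $\left(\frac{-2}{n}\right)e^{-c\pi\ii n s(-2a,n)}$ --- is precisely the content of the lemma, and you leave it as a sketch. Dedekind reciprocity relates $s(h,k)$ to $s(k,h)$, which changes the modulus, so it does not obviously ``extend from the base case $a=1$ to general $a$''; what one would actually need is the behavior of $s(a,n)$ under $a\mapsto a+n$ and the $\SL_2(\BZ)$ transformation law, together with a nontrivial check of the Jacobi-symbol and $\epsilon_n^n$ factors. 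Until that identification is written out, your proof proposal reduces the lemma to another unproved identity of comparable difficulty, whereas the paper's Vandermonde route proves it in three displayed lines.
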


\begin{proof}
We observe that our $\fourier_L$ can be obtained from the standard Fourier matrix
$\frac{1}{\sqrt{n}}(\zeta^{-ij})$ by a row permutation $i\mapsto-2i$. An extension of
Zolotarev's result (which was originally stated for $n$ prime) shows that the sign of
the permutation is the Jacobi symbol.
Thus, we can work with the new matrix instead.

Since the Fourier matrix is a Vandermonde matrix, the determinant is given by the
classical formula
\begin{equation}
\left(\frac{-2}{n}\right)\det\fourier_L=\frac{1}{n^{n/2}}
\prod_{i=1}^{n-1}\prod_{j=0}^{i-1}(\zeta^{-i}-\zeta^{-j}).
\end{equation}
We can pull out factors of $\zeta^{-i}$ and rearrange the product to get
\begin{align}
\left(\frac{-2}{n}\right)\det\fourier_L
&=\frac{1}{n^{n/2}}\zeta^{-\sum_{i=1}^{n-1}i^2}\prod_{k=1}^{n-1}(1-\zeta^{-k})^k
=e^{8\pi\ii ns(-2a,n)}\left(e^{\pi\ii s(-2a,n)}
  \frac{\calD_{\zeta^{-1}}(1)}{\sqrt{n}}\right)^n \,.
\end{align}
Recall $\calD_{\zeta^{-1}}(1)$ is normalized to be $\sqrt{n}$. Then this simplifies
to $e^{-3\pi\ii ns(-2a,n)}$. The second part is similar.
\end{proof}

Next, we calculate $\det D_i$, which is given by
\begin{equation}
\label{eq-detD}
\det D_i=d_i^{n(n-1)/2}\prod_{k=0}^{n-1}(\theta_i^{-1};q^{-2})_k.
\end{equation}
Recall $d_i^n=z_i^{-1}$. A simple reordering of the factors shows that the product of
$q$-Pochhammers in \eqref{eq-detD} is $z_i^{n-1}D_{q^{-2}}^{-1}(\theta_i^{-1})$.
Thus,
\begin{equation}
\det D_i=z_i^{-\frac{n-1}{2}}z_i^{n-1}D_{q^{-2}}^{-1}(\theta_i^{-1})
=z_i^{\frac{n-1}{2}}\left(e^{\pi\ii s(-2a,n)}\calD_{\zeta^{-1}}(\theta_i^{-1})\right)^{-n}.
\end{equation}
Combined with the Fourier matrices above, we obtain the determinant part of
Proposition~\ref{prop-BWY-trdet}.
\end{proof}

%


\section{Even roots of unity}
\label{sec.even}

As we will see in the next section, even if we only care about the asymptotics of
invariants at odd roots of unity, the Quantum Modularity Conjecture predicts the
appearance of even roots nonetheless. Therefore, we take some time to explicitly
define the BWY invariants at even roots of unity.

The Chekhov--Fock algebra is a quantization of the $\PSL_2(\cx)$-character variety.
There is a related construction that quantizes the $\SL_2(\cx)$-character variety,
which we call the balanced Chekhov--Fock square root algebra or the balanced algebra
for short, and it contains the original Chekhov--Fock algebra.

When $q$ is a root of unity of odd order, the representations of the Chekhov--Fock
algebra and the balanced algebra are essentially the same. This follow from the fact
that irreducible representation of both algebras have equal dimensions. This is
discussed in \cite[Section~3.5]{BWY:I}. However, when the order is even, the
dimensions start to differ between the two algebras. This means there are two closely
related but distinct generalizations of the invariants. It turns out that quantum
modularity selects the one coming from the balanced algebra when the order is a
multiple of $4$.


The full generality of the balanced algebra is very technical. Here, we choose to
present the specialized descriptions for the once-punctured torus. Admittedly, some
results are given without proof. We plan to discuss the full theory in later works.

\subsection{Balanced algebra}

Choose a square root $A=q^{1/2}$, which is used in Weyl-ordering. For the punctured
torus $\ptorus$, the balanced algebra has the presentation
\begin{equation}
\label{eq-btorus-def}
\begin{split}
\btorus& =\cx\langle U^{\pm1},V^{\pm1},W^{\pm1}\rangle / \ideal{UV-qVU,VW-qWV,WU-qUW}
\\
&\cong\cx[P^{\pm1}]\langle U^{\pm1},V^{\pm1}\rangle / \ideal{UV-qVU} \,.
\end{split}
\end{equation}
Here, $P^{-1}=[UVW]=A^{-1}UVW$ is the central element associated to the puncture as
before. $U,V,W$ are associated to pairs of edges, which can be inferred from the
discussion below. The choice of square root $A$ does not change the invariant in the
end, since the automorphism of $\btorus$ sending $U,V,W$ to their negatives
effectively changes the sign of $A$, and it commutes with the constructions below.
Note the symmetry in the second presentation is special to $\ptorus$. Most surfaces
do not have a presentation of the balanced algebra that reflects the symmetry of the
triangulation.

The balanced algebra contains a canonically embedded copy of the original
Chekhov--Fock algebra. The embedding is
\begin{equation}
\qtorus\hookrightarrow\btorus,\qquad
X\mapsto PU^2,\quad Y\mapsto PV^2,\quad
Z\mapsto PW^2,\quad
P\mapsto P.
\end{equation}
Note, $[YZ]^{-1}\mapsto U^2$, $[ZX]^{-1}\mapsto V^2$, and $[XY]^{-1}\mapsto W^2$,
which explains the ``square root'' in the name. There is a balancing condition (which
we do not explain here) that determines which monomials in the Chekhov--Fock algebra
have square roots in the balanced algebra.

As before, the balanced algebra depends on a triangulation, and there is a family of
isomorphisms connecting the division algebras. In the notations of the last section,
for two adjacent triangulations $\lambda_{i-1},\lambda_i$ that are related by a flip,
the isomorphism $\Phi_{i-1,i}:\bteich_i\to\bteich_{i-1}$ is given by
\begin{equation}\label{eq-bl-flip}
\begin{split}
\Phi_{i-1,i}(P_i)&=P_{i-1},\\
\Phi_{i-1,i}(U_i)&=
\begin{cases}
P_{i-1}^{-1}V_{i-1}^{-1},&\varphi_i=L,\\
[U_{i-1}V_{i-1}],&\varphi_i=R,
\end{cases}\\
\Phi_{i-1,i}(V_i)&=
\begin{cases}
(1+qY_{i-1})U_{i-1},&\varphi_i=L,\\
(1+qZ_{i-1})V_{i-1},&\varphi_i=R.
\end{cases}
\end{split}
\end{equation}
These formulas are extensions of \eqref{eq-CF-flip} on the Chekhov--Fock algebras.

\subsection{BWY invariants at all roots of unity}
\label{sub.BWYeven}

Now we assume $q$ is a root of unity of order $n$ with no restriction on $n$ yet. We
have a single description mostly independent of the parity of $n$. This uniformity is
special to the once-punctured torus. Genus 0 surfaces also has a similar property in
terms of the order of $q^2$ instead of $q$.

The center of the balanced algebra is generated by $U^n,V^n,P$ for any $n$. Recall
the operators $S,T\in\End(\cx^n)$ defined in \eqref{eq-ST-def}. Then up to
isomorphism, representations of the balanced algebra are of the form
$\rho_i:\btorus_i\to\End(\BC^n)$ with
\begin{equation}\label{eq-rho-bl}
\rho_i(P_i)=p\id,\qquad
\rho_i(U_i)=u_iS,\qquad
\rho_i(V_i)=v_iT.
\end{equation}
These extend \eqref{rhodef} for suitable choices of constants
$p,u_i,v_i\in\cx^\times$. We have preemptively dropped the dependence of $p$ on $i$.

Lemma~\ref{lem.H} also has an easy generalization with an almost identical proof.
Recall the Fourier matrices $\fourier_L,\fourier_R$ from
\eqref{eq-fouL}--\eqref{eq-fouR} (with the caveat that $q^{1/2}=A$ is now a choice
instead of being determined by $q$ alone). Write $a_i=pu_i^2$, and let
$H_i=\fourier_{\varphi_i}D_i$ where
\begin{equation}\label{eq-BWY-matbl}
D_i=\diag(d_i^k(-q^{-1}a_i^{-1};q^{-2})_k)_{k\in\ints/n\ints},\qquad
d_i=
\begin{cases}
u_{i-1}v_i^{-1},&\varphi_i=L,\\
v_{i-1}v_i^{-1},&\varphi_i=R.
\end{cases}
\end{equation}

\begin{lemma}
Assume that $D_i$ is well-defined, i.e.,
\begin{equation}
\label{eq-Dperiodic-bl}
d_i^n(-q^{-1}a_i^{-1};q^{-2})_n=1,
\end{equation}
and
\begin{equation}
\label{eq-uv}
u_i = \begin{cases}
  (pv_{i-1})^{-1}, & \text{if } \varphi_i=L \,, \\
  u_{i-1}v_{i-1}, & \text{if } \varphi_i=R \,.
\end{cases}
\end{equation}
Then
\begin{equation}
\rho_i(r)=H_i^{-1}\cdot\left(\rho_{i-1}\circ\Phi_{i-1,i}(r)\right)\cdot H_i
\end{equation}
for all $r\in\btorus_i$.
\end{lemma}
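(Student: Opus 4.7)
The plan is to follow the proof of Lemma~\ref{lem.H} essentially verbatim, now checking the conjugation identity
\[
\rho_i(r) = H_i^{-1} \cdot (\rho_{i-1} \circ \Phi_{i-1,i}(r)) \cdot H_i
\]
on each of the generators $P_i$, $U_i$, $V_i$ of $\btorus_i$. The generator $W_i$ is determined by the puncture relation $P_i^{-1} = A^{-1} U_i V_i W_i$, so the identity for $W_i$ follows automatically from those for the other three. The case $r = P_i$ is trivial: $\Phi_{i-1,i}(P_i) = P_{i-1}$ and both sides act as $p \cdot \id$.

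For $r = U_i$ I would use the identities $\fourier_L^{-1} T^{-1} \fourier_L = S$ and $\fourier_R^{-1}(q^{-1/2} ST) \fourier_R = S$ already established in the proof of Lemma~\ref{lem.H}. Since $D_i$ is diagonal it commutes with $S$, so conjugation by $H_i = \fourier_{\varphi_i} D_i$ reduces to a scalar identity. In the $L$ case, $\rho_{i-1}\circ\Phi_{i-1,i}(U_i) = p^{-1} v_{i-1}^{-1} T^{-1}$ and conjugation yields $p^{-1} v_{i-1}^{-1} S$, matching $\rho_i(U_i) = u_i S$ exactly when $u_i = (p v_{i-1})^{-1}$, which is \eqref{eq-uv}. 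The $R$ case is analogous after unpacking the Weyl-ordered product $[U_{i-1} V_{i-1}]$ into $A^{-1} u_{i-1} v_{i-1} ST$, with conjugation yielding $u_{i-1} v_{i-1} S$ and the hypothesis $u_i = u_{i-1} v_{i-1}$ closing the computation.

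For $r = V_i$ the additional identities $\fourier_L^{-1} S \fourier_L = T$ and $\fourier_R^{-1} T \fourier_R = T$ reduce $\fourier_{\varphi_i}^{-1} \cdot (\rho_{i-1} \circ \Phi_{i-1,i}(V_i)) \cdot \fourier_{\varphi_i}$ to an expression of the form $u_{i-1} (1 + q a_i^{-1} S^{-2}) T$ in the $L$ case (using $a_i = p u_i^2$ and the $U_i$-part of \eqref{eq-uv}), and to an analogous expression in the $R$ case. The core identity is then
\[
D_i^{-1} (1 + q a_i^{-1} S^{-2}) T D_i = d_i^{-1} T,
\]
which I would verify by a direct action on basis vectors $w_k$: the $q$-Pochhammer factor $(-q^{-1} a_i^{-1}; q^{-2})_k$ in $D_i$ is engineered precisely so as to telescope against the factor $(1 + q^{-2k-1} a_i^{-1})$ produced by moving $T$ past $D_i$, leaving only $d_i^{-1}$. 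Combined with $d_i = u_{i-1} v_i^{-1}$, this yields $v_i T = \rho_i(V_i)$. The periodicity hypothesis \eqref{eq-Dperiodic-bl} is exactly what is needed for $D_i$ to be well-defined on $\cx^n$, so the telescoping argument is consistent modulo $n$.

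The main obstacle I expect is the bookkeeping in the $R$-case for $V_i$. Unlike the $L$-case, the flip formula involves $Z_{i-1}$, whose representation can only be computed indirectly via the puncture relation; the resulting expression carries powers of $A = q^{1/2}$ that must be tracked carefully to recover the same form $(\text{scalar}) \cdot (1 + q a_i^{-1} S^{-2}) T$ for which the telescoping argument applies. A clean way to handle this would be to first derive a uniform formula for $\fourier_{\varphi_i}^{-1} \rho_{i-1}(\Phi_{i-1,i}(V_i)) \fourier_{\varphi_i}$ in terms of $a_i$ alone (rather than the constituent $u_i, v_i, p$), reducing the $L$ and $R$ cases to a single computation.
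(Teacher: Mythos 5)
Your proposal is correct and follows precisely the route the paper intends: the paper states the proof is ``an easy generalization with an almost identical proof'' to Lemma~\ref{lem.H}, and you carry out exactly that adaptation, checking the conjugation identity on the generators $P_i,U_i,V_i$ of $\btorus_i$ (with $W_i$ automatic from the puncture relation), reusing the four Fourier conjugation identities, and identifying the same telescoping cancellation $D_i^{-1}(1+qa_i^{-1}S^{-2})TD_i=d_i^{-1}T$. Your anticipated bookkeeping concern in the $R$-case for $V_i$ does resolve: unwinding $Z_{i-1}=P_{i-1}W_{i-1}^2$ via the puncture relation gives $\rho_{i-1}(Z_{i-1})=q^{-2}p^{-1}u_{i-1}^{-2}v_{i-1}^{-2}S^{-2}T^{-2}$, and conjugating by $\fourier_R$ (using $\fourier_R^{-1}(ST)^{-1}\fourier_R=q^{-1/2}S^{-1}$ and $S^{-2}T^{-2}=q^{3}(ST)^{-2}$) yields $v_{i-1}(1+qa_i^{-1}S^{-2})T$ with all $A$-powers cancelling, so the same telescoping closes the argument and forces $d_i=v_{i-1}v_i^{-1}$ as required.
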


When it comes to the choices of constants, we only need to specify $p$ and $u_i$,
with $v_i$ and $d_i$ being determined by \eqref{eq-uv} and \eqref{eq-BWY-matbl}. The
complication in the previous case where only $d_i^2$ is determined by
\eqref{eq-BWY-mat} is transferred to the choice of $u_i$ since we still need to solve
the periodicity condition \eqref{eq-Dperiodic-bl}. Due to the presence of $q^2$, the
parity of $n$ plays a role in the translation into Neumann--Zagier equations.

As mentioned earlier, \cite{BWY:I} already discussed the case of odd $n$. The choice
of constants depends on an $\SL_2(\cx)$-lift of the hyperbolic structure of the
mapping torus, but the invariant defined from it is independent of the choice.
Setting $p=1$, \eqref{eq-Dperiodic-bl} becomes a square root version of the
Neumann--Zagier equation \eqref{eq-NZ-Q}, and a consistent choice of square roots
corresponds to an $\SL_2(\cx)$-lift. We will not go into more details.

Now we assume $n$ is even. It turns out that the values of $p$ corresponding to the
complete hyperbolic structure of the mapping torus satisfy $(-p)^{n/2}=1$. We
parametrize the values by
\begin{equation}\label{eq-bl-pdef}
p=(-1)^{n/2}q^{2m},
\end{equation}
where $m\in\ints/\frac{n}{2}\ints$ is the descendant index.

Writing $\theta_i=-qa_i$,
the $q$-Pochhammer in \eqref{eq-Dperiodic-bl} simplifies to
$\big(1-\theta_i^{-n/2}\big)^2$. This suggests the identification
$\theta_i^{n/2}=z'_i$, which implies that
\begin{equation}\label{eq-bl-udef}
u_i^n=\left(-q^{-1}p^{-1}\theta_i\right)^{n/2}=-z'_i.
\end{equation}

Rewriting $d_i$ using \eqref{eq-uv} as
\begin{equation}
d_i=p^{\ell_{i+1}}\prod_{j=1}^N u_j^{Q_{ij}}, \qquad
\ell_i=\begin{cases}1,&\varphi_i=L,\\0,&\varphi_i=R.\end{cases}
\end{equation}
Then we can check that the periodicity condition is satisfied using the
Neumann--Zagier equation \eqref{eq-NZ-Q}.

\begin{definition}
The generalization of Definition~\eqref{def.BWY} to roots of unity $q$ of order $n$
divisible by $4$ is given by the same formula \eqref{Tphidef} with the new matrices
$H_i$ above using constants determined from \eqref{eq-bl-pdef} and
\eqref{eq-bl-udef}.
\end{definition}

The omission of $n\equiv2\pmod{4}$ is explained in the next section.

\begin{example}
For $\varphi=LR$, $z'_i=\zeta_6$. The formulas above give
\begin{equation}
\label{eq-BWYLR-even}
T_{LR}(q)=\frac{1}{n}\zeta_6^{\frac{n-2}{2n}}\calD_{q^{-2}}^2(\theta^{-1})\Big(
\sum_{k=0}^{n-1}q^{(k^2-k)/2}(-\theta)^{k/2}(\theta^{-1};q^{-2})_k
\Big)^2
\end{equation}
As mentioned before, the result does not depend on $A=q^{1/2}$. Note the superficial
similarity with \eqref{BWYLR}, with some subtle differences hidden in the notations.

We can also compare with the 1-loop invariant if we pick different elimination
variables to make $B$ unimodular so that $d=1$ in the notation of
Definition~\ref{def.1loop}. This is possible if the homology $H_1(M_\varphi)$ of the
mapping torus $M_\varphi$ has no 2-torsion. In this case, we find numerical
agreements with the generalization of Conjecture~\ref{conj.1} to all roots of unity.
\end{example}

\subsection{Relation to the Chekhov--Fock algebra}

Next, we discuss what happens to the Chekhov--Fock algebra when $n$ is even. Since
only $q^4$ appears in the presentation \eqref{eq-CF-def}, the theory depends on the
order $n'=n/\gcd(n,4)$ of $q^4$ instead. For example, the center of the Chekhov--Fock
algebra is generated by $X^{n'},Y^{n'}$, and $P$. If we focus on the case when $n$ is
even, then we have two possibilities.

If $n\equiv 2\pmod{4}$, then $n'=n/2$ is odd. In this case, the center of the
Chekhov--Fock algebra is the same as the center of the balanced algebra since
$X^{n'}=P^{n'}U^n$ and similarly $Y^{n'}=P^{n'}V^n$. However, the dimension of an
irreducible representation of the Chekhov--Fock algebra is $n'$, which is half of
that of the balanced algebra. Nevertheless, the irreducible representations are very
similar. In fact, an irreducible representation of the balanced algebra decomposes as
the tensor product of an irreducible representation of the Chekhov--Fock algebra and
the 2-dimensional representation of an auxiliary algebra $\marA$ defined by
\cite{Mar:4th}. The algebra $\marA$ is isomorphic to the $2\times2$ matrix algebra,
but it is more naturally described by the presentation
\begin{equation}
\marA=\cx\langle\alpha,\beta\rangle/\ideal{\alpha\beta=-\beta\alpha,\alpha^2=\beta^2=1},
\end{equation}
where $\alpha,\beta$ are associated to some (co)homology classes on $\ptorus$. There
is an algebra embedding (valid for all $q\in\cx^\times$)
\begin{equation}\label{eq-minusq-iso}
\btorus_q\hookrightarrow\btorus_{-q}\otimes\marA,\qquad
P\mapsto -P\otimes1,\quad
U\mapsto -U\otimes\alpha,\quad
V\mapsto -V\otimes\beta,
\end{equation}
where the subscripts of $\btorus$ indicate the commutation coefficient used in the
definition \eqref{eq-btorus-def}. (This is a simpler version of the map defined in
\cite{FKBL:4th}.) An important observation is that it induces an isomorphism of the
Chekhov--Fock algebras $\qtorus_q\cong\qtorus_{-q}$ where generators $X,Y,Z$ are sent
to their negatives. It is easy to find an $SL_2(\ints)$-action on $\marA$ so that
\eqref{eq-minusq-iso} is compatible with flips \eqref{eq-bl-flip}. Since $-q$ has odd
order $n'$, irreducible representations of $\btorus_{-q}\otimes\marA$ have dimension
$2n'=n$, which is the same as that of $\btorus_q$. This implies that the invariant
from $\btorus_q$ factors into the product of $T_{\varphi}(-q)$ from $\btorus_{-q}$
and the invariant from $\marA$. The latter is independent of the order $n$ or the
$PSL_2(\cx)$-character, and it is easily calculable and has absolute value in
$\{0,1,\sqrt{2},2\}$.
In conclusion, neither the Chekhov--Fock algebra nor the balanced algebra at an
$n$-th root where $n\equiv2\pmod{4}$ provides new invariants compared to odd orders,
and in some cases the invariant from the balanced algebra vanishes for all
$n\equiv2\pmod{4}$ since the invariant from $\marA$ can vanish.

The situation where $n$ is divisible by $4$ is less trivial. In this case, the center
of the Chekhov--Fock algebra is generated by the same elements $X^{n'},Y^{n'},P$ as
the previous case, but now it is bigger than the center of the balanced algebra since
$n'=n/4$ is even smaller. An irreducible representation of the balanced algebra
decomposes as the direct sum of 4 irreducible representations of the Chekhov--Fock
algebra, which corresponds to the 4 $\SL_2(\cx)$ lifts of the $\PSL_2(\cx)$-character
of $\ptorus$. The action of the diffeomorphism $\varphi$ permutes the lifts, so the
invariant from the balanced algebra has contributions from lifts that are fixed by
$\varphi$. The individual contributions are related to the invariants from the
Chekhov--Fock algebras, but the determinant of each block is generally not normalized
as 1, only the product of all 4 blocks is normalized as 1 by construction.


\section{Asymptotics}
\label{sec.asy}

\subsection{Asymptotics and the Quantum Modularity Conjecture} 
\label{sub.QMC}

The quantum modularity conjecture concerns the asymptotics of a square matrix whose
entries are functions $J^{(\s),m}: \BQ \to \BC$, and whose rows are labeled by the
boundary parabolic $\SL_2(\BC)$-representations $\s$ of the cusped hyperbolic
3-manifold, and columns are labeled by integers $m$ (called descendant variables).

Among the boundary parabolic representations there are some distinguished ones:
$\s=\s_0$, the trivial representation, $\s=\s_1$, the geometric representation, and
$\s=\bar\s_1$, the complex conjugate of $\s_1$. The entry $J^{(\s_0),0}$ is none
other than the Kashaev invariant of the cusped hyperbolic 3-manifold. 

Part of the quantum modularity conjecture concerns the asymptotics of $J^{(\s),m}(\ga
X)$ for $\ga = \sma abcd \in \SL_2(\BZ)$ as $X$ goes to infinity with bounded
denominators. Explicitly, Equation (3.6) of~\cite{GZ:kashaev} for $\s=\bar\s_1$
assert that
\begin{equation}
\label{J2asy}
J^{(\bar\s_1)}(\ga X) \sim J^{(\bar\s_1)}(X) e^{\frac{V_\BC}{2\pi i}\big(X+d/c -
\frac{1}{\den(X)^2(X+d/c)}  \big)} \Phi_{a/c}\big(\frac{2 \pi i }{c(cX+d)}\big)
\end{equation}
to all orders in $1/X$. Here $V_\BC = i \mathrm{Vol} + \mathrm{CS} \in \BC/4 \pi^2
\BZ$ is the complexified volume, and $\Phi_{a/c}(h)$ is a power series with algebraic
coefficients, which lie in the trace-field of the knot adjoined $\e(a/c)$ after
divided by the constant term.

The Quantum Modularity Conjecture asserts much more than~\eqref{J2asy},
namely includes exponentially small corrections, which when taken into account,
conjecturally define matrix-valued holomorphic functions in the complex cut-plane.

Choosing $\g = \sma 0{-1}10$ and $X=n/2$, with $n$ odd and denoting
$v=V_\BC/(2\pi i)$, Equation~\eqref{J2asy} gives
\begin{equation}
\label{J2asyb}
\frac{J^{(\bar\s_1)}(-2/n)}{J^{(\bar\s_1)}(n/2)} \sim e^{\frac{v}{2} (n-1/n)}
\Phi_0\big(\frac{4 \pi i }{n}\big) \,.
\end{equation}

The above equation is all that we need from the Quantum Modularity Conjecture, and
exactly matches with the numerical asymptotics of the BWY invariant
$T_\varphi(\e(x))$ if it is identified with $J^{(\bar\sigma_1)}(-2x)$ up to some phase
factor, few terms of which are given in~\eqref{LRasyfew} with more terms given in the
sections below.

\subsection{Computing the 1-loop and the BWY invariants}
\label{sub.compute}

In this section we discuss computational aspects of the 1-loop and the BWY invariants.

From its very definition, the computation of the 1-loop invariant at a root of unity
requires $O(n^N)$ steps where $n$ is the order of the root of unity and $N$ is the
number of tetrahedra. Note the $q$-Pochhammers require $O(n)$ time, so the order of
calculation needs to be considered carefully to avoid repeated evaluations.

On the other hand, the BWY invariant of a pA homeomorphism $\vphi$ of a
once-punctured torus bundle is given by the trace of the product of $N$ matrices of
size $n \times n$, where $n$ is the order of the root of unity and $N$ is the length
of $\vphi$ written as a word in $L$/$R$ (see Definition~\eqref{def.BWY}). It follows
that the naive computation of the BWY invariant has time complexity $O(N n^3)$ and
space complexity $O(n^2)$, but this can be optimized. The space requirement can be
lowered to $O(n)$ by splitting the first matrix into row vectors and use
vector-matrix multiplications instead. The time complexity can also be lowered to
$O(Nn^2\log n)$ by a fast Fourier transform implementation.

Note the working precision also affects the complexity. The time is at least linear
in precision, and the space grows linearly in precision. For reference, if $n=1001$
and the precision is 4000 bits (roughly 1200 decimal digits) for both real and
imaginary parts, then a single matrix takes over 1GB of space.

Finally, we remark that catastrophic cancellation is a concern for the numerical
reliability of the result. Experimentally, we find that the precision loss is small
by comparing with results using higher precision.

\subsection{The case of $LR$}
\label{sub.LR}

Using 200 values of $T_{LR}(\e(1/n))$ for odd $n$ from $n=20001,\dots,20399$ and 5000
digit precision of \texttt{pari} and the extrapolation methods of~\cite{GZ:kashaev},
we were able to compute 50 terms of the asymptotics of $T_{LR}(\e(1/n))$. We give 21
terms here and more are available.


\begin{equation}
\label{LRasy}
\frac{T_{LR}(\e(\frac{1}{n}))}{T_{LR}(\e(-\frac{n}{4}))}
\sim
e^{\frac{v}{2}(n-\frac{1}{n})} \Phi_{LR}  \Big(\frac{4 \pi \ii}{n} \Big),
\quad
\Phi_{LR}(h) = \tau_{LR,\lambda}(1)\sum_{k=0}^\infty \frac{a_k}{D_k} \Big(\frac{h}{3 \sqrt{-3}}\Big)^k,
\end{equation}
where $\tau_{LR,\lambda}(1)=1/\sqrt{3}$ is the 1-loop invariant at $\zeta=1$, $D_n$
is the universal denominator of~\cite[Eqn(142)]{GZ:kashaev}
\begin{equation}
\label{Dn}
D_n = 2^{3n + v_2(n!)} \prod_{\substack{\text{$p$  prime} \\ p>2}}
\,p^{\;\sum_{i\ge 0}[n/p^i(p-2)]},
\end{equation}
the first 21 of which are given by
\begin{tiny}
\be
\label{Dvalues}
\begin{aligned}
  D_0 &= 1
  & D_{7} &= 2^{25} \cdot 3^9 \cdot 5^2 \cdot 7
  & D_{14} &= 2^{53} \cdot  3^{19} \cdot  5^{4} \cdot  7^{2} \cdot  11 \cdot  13
  \\
  D_1 &= 2^3 \cdot 3
  & D_{8} &= 2^{31} \cdot  3^{10} \cdot  5^{2} \cdot  7
  & D_{15} &= 2^{56} \cdot  3^{21} \cdot  5^{6} \cdot  7^{3} \cdot  11 \cdot  13
  \cdot  17
  \\
  D_2 &= 2^7 \cdot 3^2
  & D_{9} &= 2^{34} \cdot  3^{13} \cdot  5^{3} \cdot  7 \cdot  11
  & D_{16} &= 2^{63} \cdot  3^{22} \cdot  5^{6} \cdot  7^{3} \cdot  11 \cdot  13
  \cdot  17
  \\
  D_3 &= 2^{10} \cdot 3^4 \cdot 5
  & D_{10} &= 2^{38} \cdot  3^{14} \cdot  5^{3} \cdot  7^{2} \cdot  11
  & D_{17} &= 2^{66} \cdot  3^{23} \cdot  5^{6} \cdot  7^{3} \cdot  11 \cdot  13
  \cdot  17 \cdot 19
  \\
  D_4 &= 2^{15} \cdot 3^5 \cdot 5
  & D_{11} &= 2^{41} \cdot  3^{15} \cdot  5^{3} \cdot  7^{2} \cdot  11 \cdot  13
  & D_{18} &= 2^{70} \cdot  3^{26} \cdot  5^{7} \cdot  7^{3} \cdot  11^{2}
  \cdot  13 \cdot  17 \cdot  19
  \\
  D_5 &= 2^{18} \cdot 3^6 \cdot 5 \cdot 7
  & D_{12} &= 2^{46} \cdot  3^{17} \cdot  5^{4} \cdot  7^{2} \cdot  11 \cdot  13
  & D_{19} &= 2^{73} \cdot  3^{27} \cdot  5^{7} \cdot  7^{3} \cdot  11^{2}
  \cdot  13 \cdot  17 \cdot  19
  \\
  D_6 &= 2^{22} \cdot 3^8 \cdot 5^2 \cdot 7
  & D_{13} &= 2^{49} \cdot  3^{18} \cdot  5^{4} \cdot  7^{2} \cdot  11 \cdot  13
  & D_{20} &= 2^{78} \cdot  3^{28} \cdot  5^{7} \cdot  7^{4} \cdot  11^{2}
  \cdot  13 \cdot  17 \cdot  19
\end{aligned}  
\ee
\end{tiny}

and the first 21 coefficients $a_k$ are given by
\begin{tiny}
\begin{equation}
\label{LRvalues}
\begin{aligned}
a_0 &= 1
\\
a_1 &= 17
\\
a_2 &= 2305
\\
a_3 &= 4494181
\\
a_4 &= 3330710213
\\
a_5 &= 5712350244311
\\
a_6 &= 52439486675194979
\\
a_7 &= 19266759263233318405
\\
a_8 &= 66121441024491501701765
\\
a_9 &= 16057617271207914483637539331
\\
a_{10} &= 124141789617951906037615282061569
\\
a_{11} &= 990570538120722127305829578974187175
\\
a_{12} &= 40138653318545997972857202310993641324451
\\
a_{13} &= 29576935097999521111492046073898594892534975
\\
a_{14} &= 47226781739778967005629953528286582410693258585
\\
a_{15} &= 362429595685359227454501841137256200262515338447122139
\\
a_{16} &= 5342698277307014122229197133594085697739662949136507986203
\\
a_{17} &= 99765301533262256100578502016534676122077769923441605548888705
\\
a_{18} &= 103139135210996186397045798509998018431340913521815632904023932244423
\\
a_{19} &= 114042545179030657632936839533863319321123228769135395651447724677783261
\\
a_{20} &= 3726987986695921904732430600737186670799479170839193448222924045573242609263
\end{aligned}
\end{equation}
\end{tiny}


Using \eqref{eq-BWYLR-even}, the denominator
\begin{equation}
T_{LR}(\e(-n/4))=\frac{1}{\sqrt{2}}(\sqrt{3}-(-1)^{(n-1)/2})
\end{equation}
has two possible values depending on $n\bmod4$. This explains the ``bimodal pattern''
observed in \cite{BWY:I}.

The case of the pA map $LR$ is rather special, and this is reflected in the
complexity of the computation as well as in the results. For example,
$T_{LR}(\e(1/n))$ (or $\tau_{LR,\lambda}(\e(2/n))$) can be computed in $O(n)$-steps
as opposed to $O(n^2)$-steps due to the fact that the double sum in the definition
decouples as a product of two single sums. The geometric representation is obtained
by the matching of two regular ideal tetrahedra of shapes $\z_6$ each and
$(\z_6)'=(\z_6)''=\z_6$, which happens to be a root of unity. In addition, the
invariant trace field $\BQ(\sqrt{-3})$ is quadratic, and the manifold is
amphicheiral, hence the coefficients of the asymptotic series are essentially
rational numbers.

\subsection{The case of $LLR$}
\label{sub.LLR}

In this section we discuss a more interesting example, namely $\varphi=LLR$. Here, we
found an interesting distinction between the 1-loop invariant $\tau_{LLR,\lambda}$
and the BWY invariant $T_{LLR}$. The phase of $\tau_{LLR,\lambda}$ has nice
asymptotics, whereas $T_{LLR}$ has small irregularities due to extra factor $\omega$
in the determinant calculation of Proposition~\ref{prop-BWY-trdet}. The results below
are stated with a mix of the 1-loop and BWY invariants, but the calculations are obtained from
the BWY invariant for efficiency.

If we calculate the 1-loop using \texttt{SnapPy} data, we need to take
\texttt{'b++LRL'} to compensate the cyclic permutation mentioned in
Subsection~\ref{sub.layered}. Then
\begin{equation}
\mb{G} = \begin{pmatrix}
2 & 0 & 0 \\
0 & 2 & 2 \\
0 & 0 & 0 \\
0 & -2 & 0 \\
0 & -1 & 0
\end{pmatrix}, \qquad
\mb{G}' = \begin{pmatrix}
0 & 1 & 1 \\
2 & 0 & 0 \\
0 & 1 & 1 \\
-2 & 0 & 2 \\
0 & 0 & 0
\end{pmatrix}, \qquad
\mb{G} = \begin{pmatrix}
0 & 0 & 0 \\
0 & 0 & 0 \\
2 & 2 & 2 \\
0 & 2 & 0 \\
1 & 0 & 1
\end{pmatrix}, \qquad
\boldsymbol{\eta} = \begin{pmatrix}
2 \\ 2 \\ 2 \\ 0 \\ 0
\end{pmatrix}.
\end{equation}
In \texttt{SnapPy}, the homological longitude for a once-punctured torus bundle is
the second to last equation. Thus,
\begin{equation}
A_\lambda=\begin{pmatrix}
0 & 1 & 1 \\
2 & 0 & 0 \\
-1 & -1 & 1
\end{pmatrix},\qquad
B_\lambda=\begin{pmatrix}
2 & 0 & 0 \\
0 & 2 & 2 \\
0 & -2 & 0
\end{pmatrix},\qquad
\nu_\lambda = \begin{pmatrix}
2 \\ 2 \\ -1
\end{pmatrix}.
\end{equation}
This agrees with Example~\ref{AB2} after adding the middle row to the bottom. Then
\begin{equation}
Q=2B_\lambda^{-1}A_\lambda=\begin{pmatrix}
0 & 1 & 1 \\
1 & 1 & -1 \\
1 & -1 & 1
\end{pmatrix},\qquad
\eta=2B_\lambda^{-1}\nu_\lambda=\begin{pmatrix}
2 \\ 1 \\ 1
\end{pmatrix},
\end{equation}
which match Lemma~\ref{lem.Pm}. A flattening is given in Subsection~\ref{sub.thm1}
with $f'=1,f=0$. The complete hyperbolic structure is given by
$z'_1=\frac{3+\sqrt{-7}}{8}$, $z'_2=z'_3=\frac{1+\sqrt{-7}}{4}$. Then using
\eqref{amth}, we have
\begin{equation}
\begin{split}
\tau_{LLR,\lambda}(\zeta)&=
\frac{\calD_{\zeta^{-1}}(\theta_1^{-1})\calD_{\zeta^{-1}}(\theta_2^{-1})
  \calD_{\zeta^{-1}}(\theta_3^{-1})}
{n^{3/2}\sqrt{-8\sqrt{-7}(\frac{-1+\sqrt{-7}}{8})^{1/n}}}\\
&\quad\cdot
\sum_{k}(-1)^{k_2+k_3}\theta_1^{-k_2-k_3}\theta_2^{-2k_1}
\frac{
  \zeta^{k_2^2+k_3^2+2k_1k_2+2k_1k_3-2k_2k_3+k_1+\frac{1}{2}(k_2+k_3)}
  }
{(\zeta\theta_1^{-1};\zeta)_{2k_1}(\zeta\theta_2^{-1};\zeta)_{2k_2}
  (\zeta\theta_3^{-1};\zeta)_{2k_3}},
\end{split}
\end{equation}
where $\theta_i=(z'_i)^{1/n}$ for $i=1,2,3$ and $k=(k_1,k_2,k_3) \in (\BZ/n\BZ)^3$.
This formula gives
\begin{equation}
\tau_{LLR,\lambda}(1)=(7+\sqrt{-7})^{-1/2}
\end{equation}
and, for example,
\begin{equation}
\tau_{LLR,\lambda}(\e(2/2001)) \approx (3.727322320 - 3.259362062\ii)\cdot 10^{183}.
\end{equation}

The complexified volume of the mapping torus of $LLR$ is given by 
\begin{equation}
\label{VCLLR}
\begin{aligned}  
V_\cx & = \text{CS}_{LLR} + \ii \text{Vol}_{LLR} \\ & = R(z_1) + 2 R(z_2)
- \frac{\pi \ii}{2} \log(z_1) - \pi \ii \log (z_2) - \frac{3}{4}\pi^2
\approx \frac{1}{8}\pi^2 + 2.66674 \ii
\end{aligned}
\end{equation}
where $R$ is the Rogers dilogarithm
\begin{equation}
\label{rogers}
R(z) = \Li_2(z) + \frac{1}{2} \log (z) \log(1-z)
\end{equation}
and $z_1=\frac{-1+ \sqrt{-7}}{2}$, $z_2=\frac{1+ \sqrt{-7}}{2}$. 

The asymptotics of the 1-loop invariant we found is (subscript $LLR$ omitted for brevity)
\begin{equation}
\frac{\tau(\e(2/n))}{\delta_n \tau(1)T(\e(-n/4))}
\sim
e^{\frac{v}{2}\left(n-\frac{1}{n}\right)}
\Phi\left(\frac{4\pi\ii}{n}\right),
\quad
\Phi(h) = \tau(1)\sum_{k=0}^\infty \frac{a_k}{D_k} \Big(\frac{h}{8\cdot 7 \sqrt{-7}}\Big)^k,
\end{equation}
where $v=V_\cx/(2 \pi \ii)$, 
$\delta_n$ is a correction factor depending only on $n\bmod4$ given by
\begin{equation}
\delta_n^8=\frac{31-3\sqrt{-7}}{32},\qquad
\delta_1\approx-0.9995 + 0.0313\ii,\quad
\delta_3=e^{\pi\ii/4}\delta_1,
\end{equation}
$D_k$ is the same as in~\eqref{Dvalues}, the first few coefficients $a_k$ are given by
\begin{equation}
\label{LLRvalues}
\begin{aligned}
a_0 &= 1,\\
a_1 &= 358 - 3 \sqrt{-7},\\
a_2 &= 7 (57139 + 38532 \sqrt{-7}),\\
a_3 &= 7 (-305708866 + 1580760315 \sqrt{-7}),\\
a_4 &= 7 (-34948754616757 + 14590762181832 \sqrt{-7}),\\
a_5 &= 7^2 (-216015621732985790 + 11755310969723331 \sqrt{-7}),\\
a_6 &= 7^2 (-29690496501427874810761 - 6821015832364773754980 \sqrt{-7}),\\
a_7 &= 7^2 (-75483635753024499870522214 - 79297563089176553769763227 \sqrt{-7}),
\end{aligned}
\end{equation}
and
\begin{equation}
\begin{aligned}
T(\e(-\tfrac{1}{4}))^4&=(-24 + 18 \ii) + (-8 - 10 \ii) \sqrt{-7},&
T(\e(-\tfrac{1}{4}))&\approx -0.3194 - 1.3784\ii, \\
T(\e(-\tfrac{3}{4}))^4&=(-24 - 18 \ii) + (-8 + 10 \ii) \sqrt{-7},&
T(\e(-\tfrac{3}{4}))&\approx -2.3002 + 1.6435\ii.
\end{aligned}
\end{equation}
These values were computed using the numerically computed data at
$n=2001,\dotsc,2059$ with precision (only) 200 digits. Here, the denominator still
uses the BWY invariant $T_{LLR}$ since we lack a definition of 1-loop, and we pay the
price of an extra factor $\delta_n$.


We believe that the shape of the asymptotics of $LLR$ persists to all pA
homeomorphisms of punctured surfaces. 


\section{Fourier transform and descendants}
\label{sec.FT}

In this last section we discuss the conjectural relation between the descendant
BWY invariants and the 1-loop invariants with respect to the meridian, given simply
by a Fourier transform. Note that choice of the meridian in the 1-loop invariants
was dictated by the asymptotics of the Kashaev invariant of a knot to all orders
in perturbation theory~\cite{DG2,GZ:kashaev}.

\subsection{A remark about Fourier transform}

We need to explain what it means to sum invariants that are only well-defined up to
roots of unity in Conjecture~\ref{conj.3}. The ideal answer is that there are
definitions of the invariants that do not have any ambiguities. Currently, such
definitions are not easily available, so we give a more practical explanation.
In the form \eqref{eq.ff2}, the ambiguity only comes from the choice of the $n$-th
root of $\det H$ in the definition of $T_{\varphi,\ell}$. By
Proposition~\ref{prop-BWY-trdet}, $\det H$ is actually independent of $\ell$, so it
can be factored out, making the sum well-defined.

\subsection{Meridian for once-punctured torus bundles}
\label{sub-meridian}

Previously we ignored the sign of the homeomorphism $\varphi$ because it only
affects the meridian. However, now that we need the meridian, we will bring the sign
back into the discussion.

For once-punctured torus bundle, the layered triangulation has a canonical meridian
if the sign is $+$. This is given by the curve in the layered cusp diagram (as in
Figure~\ref{fig-cusp-layer}) connecting the centers of the triangles with the same
label, say 0. This allows us to write down the meridian equation
\begin{equation}
e^{0\pi\ii} = \prod_{i=0}^{N}
\begin{cases}
z''_{i-1},&\varphi_i=L,\\
z_{i-1}^{-1},&\varphi_i=R.
\end{cases}
\end{equation}

If the sign of $\varphi$ is $-$, the identification of the tetrahedron $T_1=T_N$ has
an extra rotation by $\pi$ compared to the $+$ case. Thus, in the layered cusp
diagram, the label $0$ in $T_N$ is identified with the label $2$ of $T_1$. To obtain
a closed curve, we need to go around once more. This gives a curve that intersects the
longitude twice, and its gluing equation is the square of the meridian equation for
$+$ as above. On the other hand, the longitude for both signs are the same. Thus, for
Conjecture~\ref{conj.3} to hold, the ``meridian" for the $-$ case needs to be half of
this curve.

A difficulty here is that with our triangulation, the matrix $B$ is always
degenerate for the meridian. It is easy to see from the meridian equation above that
the $B$ part of the meridian is all $-1$, while the sum of the rows of $B$
corresponding to $L$'s is all $2$. Thus, we cannot find a simple proof of
Conjecture~\ref{conj.3} for once-punctured torus bundles.

\begin{example}
\label{ex.LRm}
For $4_1$, the knot meridian and the mapping torus meridian agree. The descendant
version of \eqref{tau41M} is
\begin{equation}
\tau_{4_1,\mu,m}(\z) = \frac{1}{n\sqrt[4]{3}} \calD_{\z^{-1}} (\th^{-1})^2
\sum_{k, \ell \bmod n} \frac{\z^{-k \ell+m(k-\ell)} \th^{k+\ell}}{
  (\z\th^{-1};\z)_k(\z\th^{-1};\z)_\ell}.
\end{equation}
The descendant version of \eqref{tau41L} is
\begin{equation}
\tau_{4_1,\lambda,m}(\z) =
\frac{\calD_{\z^{-1}} (\th^{-1})^2}{n\sqrt{3}\zeta_6^{\frac{1-n}{2n}}} 
s_ms_{-m} \quad \text{where}\quad
s_m=\sum_{k \bmod n} (-1)^k  \frac{\z^{k^2+k/2+mk}
\th^{-k}}{(\z\th^{-1};\z)_{2k}}\,.
\end{equation}
The descendant version of \eqref{BWYLR} is
\begin{equation}
\label{TLRm}  
T_{LR,m}(q) = \frac{1}{n} \zeta_6^{\frac{n-1}{2n}} \calD_{q^{-2}}(\th^{-1})^2
\sigma_m\sigma_{-m} \,,
\end{equation}
where
\begin{equation}
\label{sm}
\sigma_m =
\sum_{k \bmod n} q^{(k^2-k)/2+mk} (-\th)^{k/2} (\th^{-1};\z^{-2})_k \,.
\end{equation}
\end{example}

We have checked Conjecture~\ref{conj.3} numerically for 
\begin{enumerate}
  \item $\varphi=LR$ for all odd $n\le13$,
  \item all $\varphi$ with length at most 4 for all odd $n\le9$, and
  \item a few more time-consuming examples such as $\varphi=LR$ with $\zeta=\e(1/51)$ and $\varphi=L^3R^2$ with $\zeta=\e(2/9)$.
\end{enumerate}


\subsection{$q$-holonomic aspects}
\label{sub.qholo}

Using~\eqref{sm}, one can show with an elementary computation that
$\Sigma_m=\theta^m\sigma_{2m}$ satisfies the linear $q$-difference equation
\begin{equation}
\label{rec.TDnew}  
q\Sigma_{m+1}+(q^{-4m}-q-q^{-1})\Sigma_m+q^{-1}\Sigma_{m-1}=0.
\end{equation}
Then Equation~\eqref{TLRm} implies that $T_{LR,2m}(\z)$ satisfies, as a function of
$m$, a fourth order linear $q$-difference equation that can be computed by the
\texttt{HolonomicFunctions} method~\cite{Koutschan}

\begin{tiny}
\begin{multline}
\label{TLRrec}
q^{8 m+12} \left(q^{2 m+5}-1\right) \left(q^{2 m+5}+1\right) \left(q^{4 m+10}+1\right)
\left(-q^{4 m+7}-q^{4 m+9}-q^{4 m+11}-q^{4 m+13}+q^{8 m+20}+1\right) T_{m} \\
+
q^{4 m+7} \left(q^{4 m+3}+3 q^{4 m+5}+2 q^{4 m+7}+2 q^{4 m+9}+2 q^{4 m+11}
  +2 q^{4 m+13}+q^{4 m+15}-q^{8 m+8}-2 q^{8 m+10}-3 q^{8 m+12} \right. \\ \left.
  -4 q^{8 m+14}
  -5 q^{8 m+16}-4 q^{8 m+18}-2 q^{8 m+20}-q^{8 m+22}+q^{12 m+15}+q^{12 m+17}
  +2 q^{12 m+19}+2 q^{12 m+21}+q^{12 m+23}  \right. \\ \left.
  -q^{12 m+27}-2 q^{12 m+29}-2 q^{12 m+31}
  -q^{12 m+33}-q^{12 m+35}+q^{16 m+28}+2 q^{16 m+30}+4 q^{16 m+32}+5 q^{16 m+34}
  +4 q^{16 m+36} \right. \\ \left.
  +3 q^{16 m+38}+2 q^{16 m+40}+q^{16 m+42}-q^{20 m+35}-2 q^{20 m+37}
  -2 q^{20 m+39}-2 q^{20 m+41}-2 q^{20 m+43}-3 q^{20 m+45}-q^{20 m+47}
  \right. \\ \left.
  +q^{24 m+48} +q^{24 m+50}-q^2-1\right) T_{m+1}
+
\left(q^{m+2}-1\right) \left(q^{m+2}+1\right) \left(q^{2 m+4}+1\right)
\left(q^{4 m+8}+1\right) \left(-q^{4 m+3}-q^{4 m+5}-2 q^{4 m+7} \right. \\ \left.
  -2 q^{4 m+9}
  -q^{4 m+11}-q^{4 m+13}+2 q^{8 m+10}+3 q^{8 m+12}+4 q^{8 m+14}+5 q^{8 m+16}
  +4 q^{8 m+18}+3 q^{8 m+20}+2 q^{8 m+22}-q^{12 m+17} \right. \\ \left.
  -3 q^{12 m+19}-5 q^{12 m+21}
  -7 q^{12 m+23}-7 q^{12 m+25}-5 q^{12 m+27}-3 q^{12 m+29}-q^{12 m+31}
  +2 q^{16 m+26}+3 q^{16 m+28}+4 q^{16 m+30} \right. \\ \left. +5 q^{16 m+32}+4 q^{16 m+34}
  +3 q^{16 m+36}+2 q^{16 m+38}-q^{20 m+35}-q^{20 m+37}-2 q^{20 m+39}-2 q^{20 m+41}
  -q^{20 m+43}-q^{20 m+45} \right. \\ \left. +q^{24 m+48}+1\right) T_{m+2}
+
q^{4 m+7} \left(q^{4 m+3}+2 q^{4 m+5}+2 q^{4 m+7}+2 q^{4 m+9}+2 q^{4 m+11}
  +3 q^{4 m+13}+q^{4 m+15}-q^{8 m+12}-2 q^{8 m+14} \right. \\ \left.
  -4 q^{8 m+16}-5 q^{8 m+18}
  -4 q^{8 m+20}-3 q^{8 m+22}-2 q^{8 m+24}-q^{8 m+26}-q^{12 m+15}-q^{12 m+17}
  -2 q^{12 m+19}-2 q^{12 m+21}-q^{12 m+23} \right. \\ \left.
  +q^{12 m+27}+2 q^{12 m+29}+2 q^{12 m+31}
  +q^{12 m+33}+q^{12 m+35}+q^{16 m+24}+2 q^{16 m+26}+3 q^{16 m+28}+4 q^{16 m+30}
  +5 q^{16 m+32} \right. \\ \left.
  +4 q^{16 m+34}+2 q^{16 m+36}+q^{16 m+38}-q^{20 m+35}-3 q^{20 m+37}
  -2 q^{20 m+39}-2 q^{20 m+41}-2 q^{20 m+43}-2 q^{20 m+45}-q^{20 m+47}
  \right. \\ \left. +q^{24 m+48}
  +q^{24 m+50}-q^2-1\right) T_{m+3}
+
q^{8 m+20} \left(q^{2 m+3}-1\right) \left(q^{2 m+3}+1\right) \left(q^{4 m+6}+1\right)
\left(-q^{4 m+3}-q^{4 m+5}-q^{4 m+7}-q^{4 m+9} \right. \\ \left.
  +q^{8 m+12}+1\right) T_{m+4} = 0 \,.
\end{multline}
\end{tiny}

By substituting the WKB ansatz
\begin{equation}
\label{WKB}
\tilde{\Phi}_{LR,2m}(h) = \sum_{\ell =0}^\infty c_\ell(m) \left(\frac{h}{2}\right)^j,\qquad
q=e^{h/2}
\end{equation}
in Equation~\eqref{TLRrec} where $c_\ell(m) \in \BQ(\sqrt{-3})[m]$ are polynomials
in $m$ of degree $2\ell$, we find
\begin{equation}
\label{clm}
c_\ell(m)=\sum_{k=0}^{\floor{\frac{\ell}{2}}}\tilde{a}_{\ell-2k}f_k(m)
+\sum_{k=0}^{\floor{\frac{\ell-1}{2}}}\tilde{b}_{\ell-2k}g_k(m)
\end{equation}
where $D_k$ is as in~\eqref{Dn},
$\tilde{a}_k=\left(\frac{2}{3\sqrt{-3}}\right)^k\frac{a_k}{D_k}$
is a renormalization of $a_k$ from \eqref{LRvalues}, $\tilde{b}_k$ is a new
coefficient to be determined, and $f_k(m),g_k(m)\in\BQ[m]$. The first few
values of $f_k(m)$ and of $g_k(m)$ are
\begin{tiny}
\begin{equation}
\label{fm}  
\begin{aligned}
f_0 &= 1, \\
f_1 &= -\frac{8}{3}m^4, \\
f_2 &= \frac{32}{27} m^8 - \frac{640}{81} m^6 + \frac{400}{27} m^4, \\
f_3 &= -\frac{256}{1215}m^{12} + \frac{7168}{1215}m^{10}
- \frac{180608}{3645}m^8 + \frac{1998016}{10935}m^6 - \frac{1160836}{3645}m^4
\\
g_0 &= m^2, \\
g_1 &= -\frac{8}{9}m^6 + \frac{8}{3}m^4, \\
g_2 &= \frac{32}{135}m^{10} - \frac{320}{81}m^8 + \frac{20538}{1215}m^6
- \frac{2428}{81}m^4,\\
g_3 &= -\frac{256}{8505}m^{14} + \frac{1792}{1215}m^{12} - \frac{16256}{729}m^{10}
+ \frac{1700576}{10935}m^8 - \frac{3587516}{6561}m^6 + \frac{10358761}{10935}m^4.
\end{aligned}
\end{equation}
\end{tiny}
The sequence $\tilde{b}_k$ can be determined using one descendant asymptotics
(e.g. $m=1$). With normalization
$\tilde{b}_k=-6\left(\frac{2}{3\sqrt{-3}}\right)^k\frac{b_k}{D_{k-1}}$, 
the first few values of $b_k$ are
\begin{equation}
\label{bm}
\begin{aligned}
b_1 &= 1, \\
b_2 &= 65, \\
b_3 &= 17473, \\
b_4 &= 49107541,\\
b_5 &= 48516825797, \\
b_6 &= 104606934115751, \\
b_7 &= 1158568450813142819 \,.
\end{aligned}
\end{equation}
Then the results can be checked against further descendants. We have calculated up to
$m=4$, and all terms agree.

\subsection{The Baseilhac--Benedetti invariants}
\label{sub.BB}

The BB invariants for the $4_1$ knot are given in~\cite[Eqn.(75),p.2053]{BB:analytic}.
It is a double sum which decouples as the product of two single sums, like the
BWY invariant. With additional effort, one can try to match the sum of the
BWY invariant with that of the BB invariant.
   
\begin{conjecture}
\label{conj.4}
The invariants $\tau_{M,\lambda}(e^{2\pi i/n})/\tau_{M,\lambda}(1)$ for odd $n$ agree
with the Baseilhac--Benedetti invariants of a cusped
hyperbolic 3-manifold $M$ and its geometric representation at roots of unity.
\end{conjecture}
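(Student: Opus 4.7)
The plan is to prove Conjecture~\ref{conj.4} by matching the two state sums term by term, in the same spirit as Proposition~\ref{prop-BWY-trdet} matched the 1-loop and BWY summands. Both invariants are constructed from an essential ideal triangulation $\calT$ of $M$ equipped with the geometric shape $z_j$ at each tetrahedron, and both are built from the cyclic quantum dilogarithm together with a Gaussian factor coming from the combinatorics of $\calT$. The global strategy is therefore to exhibit a bijection between the indexing sets of the two state sums and an equality of summands up to an overall factor that is itself identifiable with the ratio $\tau_{M,\lambda}(1)$ of classical invariants.

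First I would treat the $4_1$ knot as a warm-up. The BB formula~\cite[Eqn.~(75)]{BB:analytic} is a double sum that decouples as a product of two single sums; the 1-loop formula~\eqref{tau41L} decouples in precisely the same way. A change of variable $k\mapsto -2k$ together with the $q$-Pochhammer identities of Remark~\ref{rem.useful} should convert the BB summand into the summand of~\eqref{tau41L}, exactly as in Section~\ref{sub.BWY41}. The prefactors built out of $\calD_\zeta$ should then agree on the nose once the Dedekind-sum correction in~\eqref{CalDdef} is tracked through the BB normalization; at $\zeta=1$ the remaining constant must equal the adjoint Reidemeister torsion, which the BB invariant is known to compute in the classical limit.

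For a general cusped hyperbolic 3-manifold $M$, I would write both invariants as state sums over $k\in(\BZ/n\BZ)^N$ indexed by tetrahedra, of the schematic form
\begin{equation*}
(\text{phase}) \,\times\, \zeta^{\text{quadratic form in }k}
\,\times\, \prod_{i=1}^N \frac{\theta_i^{\text{linear in }k}}{(\zeta\theta_i^{-1};\zeta)_{k_i}}.
\end{equation*}
On the 1-loop side this is Definition~\ref{def.1loop}, with the Gaussian determined by $B^{-1}A$. On the BB side, after contracting the matrix dilogarithms along the internal faces of $\calT$ and summing the auxiliary indices at the edges of $\calT$, one obtains an expression of the same type: the diagonal of the matrix dilogarithm produces the $q$-Pochhammer, while the off-diagonal entries produce a Gaussian kernel whose quadratic form should, after a symplectic change of basis, coincide with $B^{-1}A$. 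Proving this identification reduces to comparing the tetrahedral weights of~\cite{BB2} with the building blocks of the perturbative Chern--Simons state integral of~\cite{DG2}, both of which are local data associated to a single oriented tetrahedron with a choice of quad.

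The main obstacle will be the combinatorial bookkeeping of flattenings and charges. Baseilhac--Benedetti organize their state sum using integer charges at edges of tetrahedra satisfying sum-around-edge conditions, whereas Definition~\ref{def.1loop} packages the same data into a single pair of flattenings $(f,f')$ constrained by $Af'+Bf=\nu$ for a chosen peripheral curve $\lambda$. Showing that the two conventions yield the same overall phase and the same $\calD_\zeta$-normalization will require a careful audit of Dedekind sums, analogous to the one performed in the proof of Proposition~\ref{prop-BWY-trdet}. Once the match is established for one triangulation of $M$, topological invariance of both sides under 2--3 Pachner moves (known for the BB construction by design, and for the 1-loop invariant by~\cite{GW:1loop}) would complete the argument for arbitrary $M$.
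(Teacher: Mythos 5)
This statement is stated as a \emph{conjecture} in the paper; there is no proof to compare your proposal against. The only text the authors offer (Section~\ref{sub.BB}) is a single observation about the $4_1$ case --- that the BB formula is a double sum that decouples into a product of two single sums, just like the BWY/1-loop state sum --- followed by the remark that ``with additional effort, one can try to match the sum of the BWY invariant with that of the BB invariant.'' Your proposal is therefore a plan for work that the paper itself leaves open, not a reconstruction of an existing argument.

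As a plan, your outline is sensible and in the spirit of Proposition~\ref{prop-BWY-trdet}, but it is not a proof, and a few of its load-bearing steps are asserted rather than established. The central claim --- that after contracting the BB matrix dilogarithms along internal faces and summing over edges one obtains a Gaussian whose quadratic form matches $B^{-1}A$ after a symplectic change of basis --- is exactly the hard content of Conjecture~\ref{conj.4}; saying it ``should'' happen is not an argument, and the two constructions use genuinely different local data (BB charges versus NZ flattenings), so the translation is not automatic. You also lean on the identification of the $\zeta=1$ normalization with the adjoint Reidemeister torsion, but the paper explicitly says that $\tau_\Gamma(1)$ equals the torsion only \emph{conjecturally}; you would be stacking one open conjecture on another. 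Finally, the closing Pachner-move step is misaimed: invariance under 2--3 moves on both sides shows that, for a fixed $M$, the equality (once established for one triangulation of $M$) is independent of the triangulation, but it does nothing to extend the result to ``arbitrary $M$.'' You still need the term-by-term identification of summands for at least one triangulation of \emph{every} cusped hyperbolic $M$, which is precisely the general matching you have only sketched. In short: a reasonable roadmap, correctly inspired by the BWY--1-loop comparison, but with the actual combinatorial bookkeeping --- the part the authors themselves flag as requiring ``additional effort'' --- left undone.
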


\subsection*{Acknowledgements} 

The authors wish to thank Thang L\^e and Campbell Wheeler for many
enlightening conversations. 


\bibliographystyle{hamsalpha}
\bibliography{biblio}
\end{document}